\crefname{hypothesis}{Hypothesis}{Hypotheses}
\title{Computing the \textit{\textbf{\lowercase{p}}}-Laplacian eigenpairs of signed graphs\thanks{ 2025.01.14
\funding{The first author is supported by the National Key R and D Program of China 2020YFA0713100, the National Natural Science Foundation of China (No. 12031017), and Innovation Program for Quantum Science and Technology 2021ZD0302902.}}}
\author{Chuanyuan Ge\thanks{School of Mathematical Sciences, University of Science and Technology of China, Hefei, China (\email{gechuanyuan@mail.ustc.edu.cn}, \email{ouyuanqin@mail.ustc.edu.cn}).}
\and Ouyuan Qin\footnotemark[2]}
\begin{document}

\maketitle

% REQUIRED
\begin{abstract}
As a nonlinear extension of the graph Laplacian, the graph $p$-Laplacian has various applications in many fields. Due to the nonlinearity, it is very difficult to compute the eigenvalues and eigenfunctions of graph $p$-Laplacian. In this paper, we establish the equivalence between the graph $p$-Laplacian eigenproblem and the tensor eigenproblem when $p$ is even. Building on this result, algorithms designed for tensor eigenproblems can be adapted to compute the eigenpairs of the graph $p$-Laplacian. For general $p>1$, we give a fast and convergent algorithm to compute the largest eigenvalue and the corresponding eigenfunction of the signless graph $p$-Laplacian. As an application, we provide a new criterion to determine when a graph is not a subgraph of another one, which outperforms existing criteria based on the linear Laplacian and adjacency matrices. Our work highlights the deep connections and numerous similarities between the spectral theories of tensors and graph $p$-Laplacians.
\end{abstract}

% REQUIRED
\begin{keywords}
signed graph, graph $p$-Laplacian, tensor eigenvalues
\end{keywords}

% REQUIRED
\begin{MSCcodes}
05C50, % Graphs and linear algebra (matrices, eigenvalues, etc.)
15A18, % Eigenvalues, singular values, and eigenvectors
15A69, % Multilinear algebra, tensor calculus
05C22 % Signed and weighted graphs
\end{MSCcodes}

\section{Introduction}
The graph $p$-Laplacian introduced by Amghibech is a generalization of the graph Laplacian \cite{Amghibech}. Due to the freedom of $p$, we can choose an appropriate $p$, so that the algorithm based on graph $p$-Laplacian is superior to the classical Laplacian algorithm. For example, it is shown that the spectral clustering algorithm based on graph $p$-Laplacian for $1<p<2$ performs better than classical Laplacian spectral clustering \cite{HeinBuhler2009}. The reason is that, by monotonicity of graph $p$-Laplacian eigenvalues \cite{Zhang23}, the Cheeger inequality given by the second eigenvalue of the graph $p$-Laplacian \cite{Amghibech} is better than that of classical Laplacian of the graph \cite{Alon86,AM85}. In semi-supervised learning, the classic Laplacian algorithm only provides trivial results when the number of unlabeled data points goes to infinity with the number of labeled data points being fixed \cite{NSZ}. In 2019, Slep\v cev and  Matthew proved that this problem could be overcome by choosing $p$ larger than the dimension of the data points \cite{DM}. In addition to the areas mentioned above, the graph $p$-Laplacian  has important applications in many other areas such as combinatorics \cite{BS18}, geometric group theory \cite{De,DM19}, image processing \cite{ETT,LDLLZX}, and machine learning \cite{HeinBuhler2010,ZHS06,ZS06}.

Although the graph $p$-Laplacian has many applications, our understanding of its eigenvalues and eigenfunctions remains limited due to its inherent nonlinearity. Many fundamental problems are still open. For example, it is still not known whether the number of eigenvalues of the graph $p$-Laplacian is finite or not. In addition, computing the eigenvalues and eigenfunctions of the graph $p$-Laplacian is a very difficult problem. In 2009, B\"uhler and Hein used optimization methods to calculate the second eigenvalue and the corresponding eigenfunctions of the graph $p$-Laplacian \cite{HeinBuhler2009}. Luo, Huang, Ding, and Nie gave an efficient gradient descend optimization approach to approximating the $p$-Laplacian embedding space \cite{LHDN}. Deidda, Segala, and Putti provided an algorithm of computing the graph $p$-Laplacian eigenpairs for $p\in(2,+\infty)$ by reformulating the graph $p$-Laplacian eigenvalue problem in terms of a constrained weighted Laplacian eigenvalue problem \cite{DNP}. Their numerical results indicate that the algorithm generally produces acceptable outcomes. However, there are certain scenarios in which the algorithm fails to converge.
 
In this paper, our primary focus is the eigenvalue and eigenfunction calculation problem of $p$-Laplacian on signed graphs. A signed graph is a graph where each edge is assigned a signature, which is a mapping from the edge set of the graph into $\{+1,-1\}$. Signed graphs have received increasing attention since it has played a crucial role in solving many open problems, such as sensitivity conjecture and the problem of the maximum number of spherical two-distance sets \cite{Huang,JTYZZ,JP}. By reformulating the $p$-Laplacian eigenproblems of signed graphs as tensor eigenproblems, we show that the $p$-Laplacian has only finitely many eigenvalues if $p$ is an even integer. Moreover, this reformulation allows us to employ algorithms calculating tensor eigenvalues to compute the signed graph $p$-Laplacian eigenpairs with $p$ being even. Cui, Dai, and Nie \cite{CDN} introduced an algorithm that computes all eigenvalues of the tensor eigenvalues. In \cref{sec:all}, we adapt this algorithm to compute all eigenpairs of $p$-Laplacian of signed $C_4$.

The above algorithm can calculate all eigenpairs of the signed graph $p$-Laplacian when $p$ is even, but it is computationally expensive. For the case where the signs on all edges are fixed as $-1$, we propose an efficient algorithm to compute the largest eigenvalue and the corresponding eigenfunction of the graph $p$-Laplacian for any $p>1$. The convergence of the algorithm is proven with the aid of the Perron--Frobenius type theorem, and numerical results align with our analysis. Since no explicit matrices or tensors are constructed during the algorithm, the solution process is fast, and the method can handle large-scale problems. It is also worth noting that this algorithm reduces to the power method for nonnegative tensor eigenvalue problems \cite{CPZ, NQZ} when $p$ is even.

The largest eigenvalue of the $p$-Laplacian is significant as it has many applications, including those related to forbidden subgraph type problems. A graph $G$ forbids a family of graphs $\mathcal{H}$ if for any $G'\in \mathcal{H}$, $G'$ is not a subgraph of $G$. The problem of determining the properties of a graph that forbids a family of graphs is a fundamental topic in graph theory. The properties include the maximum number of edges \cite{AY,ES1,ES2,MY}, chromatic number \cite{CMZ}, eigenvalue multiplicity \cite{JP,JTYZZ}, and so on. In spectral graph theory, it is known that the largest eigenvalues of the Laplacian, signless Laplacian, and adjacency matrix of $G'$ are smaller than those of a graph $G$ when graph $G'$ is a subgraph of graph $G$. This result can be used to determine whether graph $G'$ cannot be a subgraph of graph $G$. We extend this result to the graph $p$-Laplacian (see \cref{lemma:criterion}). By allowing for the free choice of $p$, we enhance the effectiveness of the above method. Theoretically, we prove that star graph $K_{1,d}$ is a subgraph of $G$ if and only if the largest eigenvalue of signless $p$-Laplacian of $K_{1,d}$ is less than that of $G$ for any $p > 1$. In \cref{sec:app}, we will present an example where the linear Laplacian and adjacency matrix fail, while while the $p$-Laplacian proves to be effective.

Our work reveals that although the eigenproblems of the graph $p$-Laplacian and tensor cannot be directly equivalent when $p$ is not even, they have deeper connections and similarities. We also hope that our work will inspire researchers in the fields of graph $p$-Laplacian or tensors to establish closer connections between these two types of problems and promote the development of these two fields.

The paper is structured as follows. In \cref{sec:pre}, we collect preliminaries on $p$-Laplacians, signed graphs, and tensors. In \cref{sec:all}, we reformulate the signed graph $p$-Laplacian eigenproblem in terms of the tensor eigenproblem and apply an adapted algorithm from tensor theory to compute all $p$-Laplacian eigenpairs of signed graphs. An algorithm to compute the largest eigenvalue and corresponding eigenfunction of signless graph $p$-Laplacian is proposed in \cref{sec:-1}, along with an analysis of its convergence. In \cref{sec:app}, we provide a criterion to determine whether a graph can be a subgraph of another based on the $p$-Laplacian, and demonstrate that this criterion improves upon the corresponding criterion based on the linear Laplacian and adjacency matrix. Concluding remarks and future work are summarized in \cref{sec:remark}.

\section{Preliminaries}\label{sec:pre}
This section reviews basic settings and notations in graph theory and definitions of both graph $p$-Laplacian eigenpairs and tensor eigenpairs.

A weighted graph is a quintuple $G=(V,E,w,\mu,\kappa)$ where $V$ is the vertex set, $E$ is the edge set which consists of binary subset of $V$, $w:E\to (0,\infty)$ is the edge measure, $\mu:V\to (0,\infty)$ is the vertex measure, and $\kappa:V\to\mathbb{R}$ is the potential function. In this paper, we only consider finite simple undirected graphs. We use $i\sim j$ to denote $\{i,j\}\in E$. A signed graph $\Gamma=(G,\sigma)$ is a weighted graph $G$ with a signature $\sigma:E\to \{+1,-1\}$. For ease of notation, we use $w_{ij}$, $\mu_i$, $\kappa_i$, and $\sigma_{ij}$ to denote $w(\{i,j\})$, $\mu(i)$, $\kappa(i)$, and $\sigma(\{i,j\})$, respectively.   Without loss of generality, we always label $V=\{1,2\ldots n\}$.

Given two weighted graph $G=(V,E,w,\mu,\kappa)$ and $G'=(V',E',w',\mu',\kappa')$, we say $G'$ is a subgraph of $G$ if $V'\subset V$, $E'\subset E$, $w' = w|_{E'}$, $\mu' = \mu|_{V'}$, and $\kappa' = \kappa|_{V'}$.

We use $C(V)$ to denote the set of all real functions on $V$, and we always identify $C(V)$ with $\mathbb{R}^n$. Given any $f\in \mathbb{R}^n$, we use $f_i$ to denote the $i$-th coordinate of $f$.

We define $\mathbb{P}^n:=\{f\in \mathbb{R}^n:f_i\geq 0 \text{ for }1\leq i\leq n\}$ and $\mathrm{int}(\mathbb{P}^n):=\{f\in \mathbb{R}^n:f_i> 0 \text{ for }1\leq i\leq n\}$. For any $f\in \mathbb{R}^n$ and $t>0$, we define 
\begin{align*}
  f^{t}:=\left(|f_1|^{t}\mathrm{sign}(f_1),|f_2|^{t}\mathrm{sign}(f_2),\ldots,|f_n|^{t}\mathrm{sign}(f_n)\right).
\end{align*}
For any $f,g\in \mathbb{R}^n$, we say $f\geq g$ if  $f-g\in \mathbb{P}^n$ and $f>g$ if $f-g\in \mathrm{int}(\mathbb{P}^n)$.

In this paper, when we use the constant $p\in\mathbb{R}$, we assume that $p>1$. Define $\Phi_p:\mathbb{R}\to \mathbb{R}$ to be  $\Phi_p(t)=|t|^{p-2}t$ if $t\neq 0$ and  $\Phi_p(t)=0$ if $t=0$.

\begin{definition}
  The $p$-Laplacian $\Delta^{\Gamma}_p:C(V)\to C(V)$ of a signed graph $\Gamma$ is defined by
  \begin{align*}
    \left(\Delta^{\Gamma}_p f\right)_i=\frac{1}{\mu_i}\left(\sum_{j\sim  i}w_{ij}\Phi_p\left(f_i-\sigma_{ij}f_j\right)+\kappa_i\Phi_p(f_i)\right),\;\;\forall i\in V,\;\;\forall f\in C(V).
  \end{align*}
If it is clear, we use $\Delta_p $ to denote $\Delta^{\Gamma}_p$.
\end{definition}

Let $G=(V,E,w,\mu,\kappa)$ be a weighted graph. The $p$-Laplacian of the graph $G$ is defined as the $p$-Laplacian of signed graph $\Gamma=(G,\sigma)$ with $\sigma\equiv+1$. The signless $p$-Laplacian of $G$ is defined as the $p$-Laplacian of signed graph $\overline{\Gamma}=(G,\overline{\sigma})$ with $\overline{\sigma}\equiv-1$.

\begin{definition}
  Given $\lambda\in \mathbb{R}$, a nonzero function $f:V\to \mathbb{R}$ is an eigenfunction of $\Delta_p$ corresponding to $\lambda$ if 
  \begin{align*}
    \left(\Delta_pf\right)_i=\lambda \Phi_p\left(f_i\right),\quad\quad\forall i\in V.
  \end{align*}
We call $\lambda$ is an eigenvalue of $\Delta_p$ and $(\lambda,f)$ is an eigenpair of $\Delta_p$.
\end{definition}

\begin{remark}
  We can extend the eigenproblem to complex cases by extending the domain of $\Phi_p$ to the complex plane. However, this is less interesting because it is not difficult to prove that $\Delta_p$ only has real eigenvalues in this situation and every eigenvalue has at least one real eigenfunction corresponding to it.
\end{remark}

For any $f\in \mathbb{R}^n$, we define $\Vert f \Vert_{p}:=\left(\sum_{i=1}^n\mu_i|f_i|^p\right)^{\frac{1}{p}}$ and $\mathcal{S}_p:=\{f\in \mathbb{R}^n:\Vert f \Vert_p=1\}$. We always use $\Vert f \Vert$ to denote $\Vert f \Vert_2$. We define the Rayleigh quotient of a nonzero function $f$ corresponding to $\Delta_p^{\Gamma}$ as follows 
\begin{align*}
  \mathcal{R}_p^{\Gamma}(f):=\frac{\sum_{\{i,j\}\in E}w_{ij}\left|f_i-\sigma_{ij}f_j\right|^p+\sum_{i=1}^{n}\kappa_i|f_i|^p}{\sum_{i=1}^n\mu_i|f_i|^p}.
\end{align*}
If no confusion may occur, we use $\mathcal{R}_p(f)$ to denote $\mathcal{R}_p^{\Gamma}(f)$.

 The largest eigenvalue is the eigenvalue whose value is the largest in all eigenvalues. We denote  the largest eigenvalue by $\lambda_{\max}(\Delta_p)$. If no confusion arises, we use $\lambda_{\max}$ instead of $\lambda_{\max}(\Delta_p)$ for ease of notation.
 
The extension of  the largest eigenvalue is followed by the Lusternik--Schnirelman theory \cite{GLZ23,Solimini,Struwe,TudiscoHein18}. Moreover, it gives a variational expression of 
the largest eigenvalue as follows:
\begin{align*}
  \lambda_{\max}(\Delta_p)=\max_{f\in \mathcal{S}_p}\mathcal{R}_p(f).
\end{align*}
 For other variational eigenvalues, their expression can be found in other literature, such as \cite{chang16,DPT23,GLZ24,GLZ23,TudiscoHein18}.

We recall the definition of tensors and tensor eigenvalues. An $m$-th order $n$-dimensional tensor $\mathcal{T}$ consists of $n^m$  entries $\mathcal{T}_{i_1,\ldots,i_m}\in \mathbb{R}$, where $i_l\in\{1,\ldots,n\}$ for $l=1,\ldots,m$. A tensor $\mathcal{T}$ is called symmetric if its entries are invariant under any permutation of their indices. 

For a $m$-th order $n$-dimensional tensor $\mathcal{T}$, we define an operator $\mathcal{T}:\mathbb{R}^n\to\mathbb{R}^n $ as follows:
\begin{align*}
  \left(\mathcal{T}f\right)_i=\sum_{i_2,\ldots,i_m=1}^n\mathcal{T}_{i,i_2,\ldots,i_m}f_{i_2}\cdots f_{i_m}, \text{ for any } f\in \mathbb{R}^n \text{ and } 1\leq i \leq n.
\end{align*}

\begin{definition}[\cite{CDN}]
  Let $\mathcal{T}$ and $\mathcal{B}$ be two $m$-th order $n$-dimensional tensors. Given $\eta\in \mathbb{R}$, a non-zero function $f$ is a  $\mathcal{B}$-eigenfunction of $\mathcal{T}$ corresponding to $\eta$ if  \[\mathcal{T}f=\eta \mathcal{B}f.\]
  We call $\eta$ a $\mathcal{B}$-eigenvalue of $\mathcal{T}$ and  $(\eta,f)$ a $\mathcal{B}$-eigenpair of $\mathcal{T}$. 
\end{definition}

\begin{remark}
  With different $\mathcal{B}$, the definition above degenerates into specific types of tensor eigenvalue, such as H-eigenvalue, D-eigenvalue, and Z-eigenvalue \cite{Lim15,Qi05,QCC}.
\end{remark}

\section{Equivalence of signed graph \texorpdfstring{$p$-Laplacian eigenproblem and tensor eigenproblem for even $p$}{p-Laplacian eigenproblem and tensor eigenproblem for even p}}\label{sec:all}
In this section, we transform the problem of eigenpairs of signed graph $p$-Laplacian into the problem of eigenpairs of $p$-th order tensor when $p$ is even, and compute all eigenpairs of the $p$-Laplacian in virtue of algorithms designed for tensor eigenpairs.

First, we show the connection between the tensor eigenpairs and the eigenpairs of signed graph $p$-Laplacian in the following proposition, which is an improved result of Proposition 7.1 in \cite{GLZ24}.
\begin{proposition}\label{prop:tensor}
  Let $\Gamma=(G,\sigma)$ be a signed graph with $G=(V,E,w,\mu,\kappa)$. Assume $p \geq 2$ is an even number and $i$ and $j$ are arbitrary vertices in $V$. Define two symmetric $p$-th order $n$-dimension tensors $\mathcal{T}^{(p)}$ and $\mathcal{B}^{(p)}$ as follows
\begin{align}\label{eq:tensor}
  \mathcal{T}^{(p)}_{i_1,i_2,\ldots,i_{p}}=\left\{ \begin{aligned}
    &0, && |\{i_1,i_2,\ldots,i_{p}\}|\geq 3,\\
    &0, && \{i_1,i_2,\ldots,i_{p}\}=\{i^{(l)},j^{(p-l)}\} \text{ and } i \nsim j,\\
    &(-\sigma_{ij})^{p-l}w_{ij}, && \{i_1,i_2,\ldots,i_{p}\}=\{i^{(l)},j^{(p-l)}\} \text{ and }i \sim j,\\
    &\sum_{j \sim i}w_{ij}+\kappa_{i}, && \{i_1,i_2,\ldots,i_{p}\}=\{i^{(p)}\},
  \end{aligned}
  \right.
\end{align}  
and
\begin{align}\label{eq:tensor2}
  \mathcal{B}^{(p)}_{i_1,i_2,\ldots,i_{p}}=\left\{ \begin{aligned}
    &\mu_i, && i_1=i_2=\cdots=i_p = i ,\\
    &0,  && \text{otherwise},
  \end{aligned}
  \right.
\end{align}   
where $|\{i_1,i_2,\ldots,i_{p}\}|$ is the cardinality of set $\{i_1,i_2,\ldots,i_{p}\}$ and $\{i_1,i_2,\ldots,i_{p}\}=\{i^{(l)},j^{(p-l)}\}$ means there are $l$ $i$'s and $(p-l)$ $j$'s in $\{i_1,i_2,\ldots,i_{p}\}$. Then $(\lambda,f)$ is an eigenpair of $\Delta_p$ if and only if it is a $\mathcal{B}^{(p)}$-eigenpair of $\mathcal{T}$.
\end{proposition}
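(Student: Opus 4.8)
The plan is to compute both sides of the tensor eigenequation $\mathcal{T}^{(p)}f=\eta\,\mathcal{B}^{(p)}f$ coordinatewise and match the result with the defining equation of a $p$-Laplacian eigenpair. The point of departure is that, since $p$ is even, $p-2$ is even as well, so $\Phi_p(t)=|t|^{p-2}t=t^{p-1}$ for every $t\in\mathbb{R}$ (including $t=0$, as $0^{p-1}=0$); hence the eigenequation $(\Delta_pf)_i=\lambda\Phi_p(f_i)$ is the polynomial identity
\begin{align*}
  \sum_{j\sim i}w_{ij}(f_i-\sigma_{ij}f_j)^{p-1}+\kappa_i f_i^{p-1}=\lambda\mu_i f_i^{p-1},\qquad\forall i\in V.
\end{align*}
So it will be enough to show, for every $f\in\mathbb{R}^n$ and every $i\in V$, that $(\mathcal{T}^{(p)}f)_i$ equals the left-hand side above and that $(\mathcal{B}^{(p)}f)_i=\mu_i f_i^{p-1}$; the asserted equivalence then follows with $\eta=\lambda$, after dividing by $\mu_i>0$.

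Before this I would record that $\mathcal{T}^{(p)}$ is well defined and symmetric. The only subtlety is that the value assigned to the multiset $\{i^{(l)},j^{(p-l)}\}$ must not depend on which of the two vertices is called ``$i$'': reading the same multiset as $\{j^{(p-l)},i^{(l)}\}$ yields the value $(-\sigma_{ij})^{l}w_{ij}$, and this agrees with $(-\sigma_{ij})^{p-l}w_{ij}$ precisely because $p-2l$ is even. Symmetry of $\mathcal{B}^{(p)}$ is immediate from its definition.

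The core computation is the expansion of $(\mathcal{T}^{(p)}f)_i=\sum_{i_2,\ldots,i_p}\mathcal{T}^{(p)}_{i,i_2,\ldots,i_p}f_{i_2}\cdots f_{i_p}$. The index tuples with first coordinate $i$ and nonzero entry are of two kinds: (a) the all-$i$ tuple, with entry $\sum_{j\sim i}w_{ij}+\kappa_i$ and monomial $f_i^{p-1}$; and (b) for each neighbor $j\sim i$ and each $l$ with $1\le l\le p-1$, the tuples whose multiset is $\{i^{(l)},j^{(p-l)}\}$, of which there are $\binom{p-1}{l-1}$ (choose which of the $p-1$ free positions carry the remaining $l-1$ copies of $i$), each with entry $(-\sigma_{ij})^{p-l}w_{ij}$ and monomial $f_i^{l-1}f_j^{p-l}$. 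Summing (b) over $l$ and reindexing by $k=l-1$ gives, for each neighbor $j$,
\begin{align*}
  w_{ij}\sum_{k=0}^{p-2}\binom{p-1}{k}(-\sigma_{ij})^{p-1-k}f_i^{k}f_j^{p-1-k}=w_{ij}\bigl[(f_i-\sigma_{ij}f_j)^{p-1}-f_i^{p-1}\bigr],
\end{align*}
the equality being the binomial theorem with its top ($k=p-1$) term, which is exactly $f_i^{p-1}$, transposed to the other side. Adding this over $j\sim i$ and then adding the contribution of (a), the spurious $-\sum_{j\sim i}w_{ij}f_i^{p-1}$ cancels against the $\sum_{j\sim i}w_{ij}$ inside the diagonal entry, leaving $(\mathcal{T}^{(p)}f)_i=\sum_{j\sim i}w_{ij}(f_i-\sigma_{ij}f_j)^{p-1}+\kappa_i f_i^{p-1}$, as wanted. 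Finally $(\mathcal{B}^{(p)}f)_i=\mu_i f_i^{p-1}$ because the only nonzero entry with first index $i$ is the all-$i$ one.

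I expect the main obstacle to be purely bookkeeping: pinning down the multiplicity $\binom{p-1}{l-1}$ and the exponent $p-l$ on $(-\sigma_{ij})$, treating correctly the boundary values $l=p$ (which is the diagonal term of (a), not a tuple of type (b)) and $l=0$ (excluded because the first coordinate is forced to be $i$), and noticing that the diagonal value $\sum_{j\sim i}w_{ij}+\kappa_i$ is engineered exactly so as to restore the missing top terms of all the binomial expansions. Once the counting is organized this way, the remainder is the binomial theorem and a cancellation. Note that the evenness of $p$ enters twice: once to identify $\Phi_p(t)$ with $t^{p-1}$, and once to guarantee the consistency, hence symmetry, of $\mathcal{T}^{(p)}$.
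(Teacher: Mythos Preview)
Your proposal is correct and follows essentially the same route as the paper: a coordinatewise expansion of $(\mathcal{T}^{(p)}f)_i$ via the binomial theorem, recognizing the result as $\mu_i(\Delta_p f)_i$, together with the trivial computation of $(\mathcal{B}^{(p)}f)_i$. The only differences are cosmetic (you index by the number of $i$'s rather than of $j$'s) and that you add two helpful remarks the paper leaves implicit: the consistency check showing $\mathcal{T}^{(p)}$ is well defined and symmetric, and the two places where evenness of $p$ is actually used.
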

\begin{proof}
  Given any $f:V\to \mathbb{R}$ and $1 \leq i \leq n$, we compute
\begin{align*}
  \begin{aligned}   
    \left(\mathcal{T}^{(p)}f\right)_i&=\sum_{i_2,\ldots,i_p=1}^n\mathcal{T}^{(p)}_{i,i_2,\ldots,i_p}f_{i_2}\cdots f_{i_p}\\
    &=\bigg(\kappa_i+\sum_{j\sim i}w_{ij}\bigg)f_i^{p-1}+\sum_{j\sim i}\sum_{l=1}^{p-1}\tbinom{p-1}{l}(-\sigma_{ij})^lw_{ij}f_i^{p-1-l}f_j^{l}\\
    &=\sum_{j\sim i}w_{ij}\left(f_i-\sigma_{ij}f_j \right)^{p-1}+\kappa_if_i^{p-1}\\
    &=\mu_i\left(\Delta_{p} f\right)_i,
  \end{aligned}
\end{align*}
and $\left(\mathcal{B}^{(p)} f\right)_i=\mu_i\Phi_p(f_i)$. 

Then we have $\left(\Delta_p f\right)_i=\lambda\Phi_p(f_i)$ if and only if $\left(\mathcal{T}^{(p)}f\right)_i=\lambda\left(\mathcal{B}^{(p)} f\right)_i$ for any $1\leq i \leq n$. This completes the proof of this proposition. 
\end{proof}

Due to the above proposition, we have that the problem of eigenpairs of signed graph $p$-Laplacian is equivalent to the problem of eigenpairs of the tensor.

It is an open problem whether the number of the eigenvalues of signed graph $p$-Laplacian is finite. The above transformation allows us to use existing results to demonstrate that, in the case of even $p$, the signed graph $p$-Laplacian has only a finite number of eigenvalues. Actually, in the literature of \cite{CDN}, the authors showed the following fact.
\begin{lemma}
  Given two symmetric tensors $\mathcal{T}$ and  $\mathcal{B}$, there are finitely many $\mathcal{B}$-eigenvalues of $\mathcal{T}$.
\end{lemma}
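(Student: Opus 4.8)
The plan is to reduce the $\mathcal{B}$-eigenvalue equation to the vanishing of a single univariate polynomial in $\eta$ by elimination theory. Fix the order $m$ and the dimension $n$. Since $\mathcal{T}f-\eta\mathcal{B}f$ is homogeneous of degree $m-1$ in $f$, a $\mathcal{B}$-eigenpair $(\eta,f)$ stays a $\mathcal{B}$-eigenpair after rescaling $f$, so it is natural to work projectively over $\mathbb{C}$ and consider
\begin{align*}
  W:=\bigl\{\,([f],\eta)\in\mathbb{P}^{n-1}_{\mathbb{C}}\times\mathbb{A}^{1}_{\mathbb{C}}\ :\ \mathcal{T}f-\eta\mathcal{B}f=0\,\bigr\},
\end{align*}
which is Zariski-closed, being cut out by the $n$ coordinate forms of $\mathcal{T}f-\eta\mathcal{B}f$, each homogeneous of degree $m-1$ in $f$ and affine-linear in $\eta$. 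Every complex, hence in particular every real, $\mathcal{B}$-eigenvalue of $\mathcal{T}$ lies in the image of $W$ under the projection $\pi\colon\mathbb{P}^{n-1}_{\mathbb{C}}\times\mathbb{A}^{1}_{\mathbb{C}}\to\mathbb{A}^{1}_{\mathbb{C}}$.

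First I would invoke the fundamental theorem of elimination theory: $\mathbb{P}^{n-1}_{\mathbb{C}}$ is complete, so $\pi$ is a closed map and $\pi(W)$ is Zariski-closed in the affine line. A Zariski-closed subset of $\mathbb{A}^{1}_{\mathbb{C}}$ is either finite or all of $\mathbb{A}^{1}_{\mathbb{C}}$; hence either $\mathcal{T}$ has only finitely many $\mathcal{B}$-eigenvalues, or every scalar is a $\mathcal{B}$-eigenvalue. It then remains to discard the second, degenerate, alternative.

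For this I would use the multivariate resultant to describe $\pi(W)$ explicitly. The generalized characteristic polynomial $\psi(\eta):=\operatorname{Res}_{f}\bigl(\mathcal{T}f-\eta\mathcal{B}f\bigr)$, the resultant of the $n$ forms $(\mathcal{T}f-\eta\mathcal{B}f)_{1},\dots,(\mathcal{T}f-\eta\mathcal{B}f)_{n}$ in the $n$ variables $f_{1},\dots,f_{n}$, vanishes at $\eta_{0}$ precisely when this homogeneous system has a nonzero complex solution, so its roots are exactly the complex $\mathcal{B}$-eigenvalues. Since each form has degree $m-1$, $\psi$ is a polynomial in $\eta$ of degree at most $n(m-1)^{n-1}$, and isolating the highest-order-in-$\eta$ contribution (the one governed by $-\eta\mathcal{B}f$) shows that its leading coefficient equals $\pm\operatorname{Res}_{f}(\mathcal{B}f)$. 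Therefore $\psi\not\equiv0$ — so the ``whole line'' case cannot occur — whenever $\mathcal{B}$ is nonsingular, in the sense that $\mathcal{B}f=0$ has only the trivial complex solution; this is the regular case, and it holds in particular for the diagonal tensor $\mathcal{B}^{(p)}$ of \cref{prop:tensor}, for which (taking $m=p$) $\operatorname{Res}_{f}(\mathcal{B}^{(p)}f)=\prod_{i=1}^{n}\mu_{i}^{(p-1)^{n-1}}\neq0$. In that case $\psi$ is a nonzero univariate polynomial, hence has finitely many roots, and so $\mathcal{T}$ has at most $n(m-1)^{n-1}$ distinct $\mathcal{B}$-eigenvalues.

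The step I expect to be the main obstacle is the resultant bookkeeping: confirming that $\psi$ is genuinely a polynomial in $\eta$ of degree $n(m-1)^{n-1}$ and that its leading coefficient is, up to sign, $\operatorname{Res}_{f}(\mathcal{B}f)$. This rests on the homogeneity and multiplicativity properties of the multivariate resultant — equivalently, on tracking the solutions that run off ``to infinity'' in the $\eta$-direction when one homogenizes the pencil as $s\,\mathcal{T}f-\eta\,\mathcal{B}f$ — and it is exactly here that the nonsingularity of $\mathcal{B}$ enters. Everything else is standard elimination theory together with the elementary verification that the tensors arising from signed graph $p$-Laplacians are nonsingular in the above sense.
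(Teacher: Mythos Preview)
The paper does not prove this lemma; it simply quotes it as a fact established in \cite{CDN}, so there is no ``paper's own proof'' to compare against. Your elimination-theory argument is the standard route to such a finiteness result and is essentially what underlies the corresponding statement in the tensor literature: passing to $\mathbb{P}^{n-1}_{\mathbb{C}}\times\mathbb{A}^{1}_{\mathbb{C}}$, using completeness of projective space to get that $\pi(W)$ is Zariski-closed in the affine line, and then identifying $\pi(W)$ with the zero locus of the generalized characteristic polynomial $\psi(\eta)=\operatorname{Res}_{f}(\mathcal{T}f-\eta\mathcal{B}f)$.

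One point worth flagging is that your proof is correct only under the nonsingularity hypothesis $\operatorname{Res}_{f}(\mathcal{B}f)\neq0$, which you impose explicitly; the lemma as stated in the paper carries no such hypothesis, and without it the statement is in fact false (take $\mathcal{T}=\mathcal{B}$ with $\mathcal{B}f=0$ admitting a nontrivial real solution, and every $\eta$ becomes an eigenvalue). You handle this honestly by restricting to nonsingular $\mathcal{B}$ and then verifying that the diagonal tensor $\mathcal{B}^{(p)}$ of \cref{prop:tensor} satisfies this, which is exactly what the downstream application to $\Delta_p$ requires. So your argument proves what the paper actually needs, even if the lemma's wording in the paper is slightly loose.
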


We have the following fact by the above lemma and \cref{prop:tensor}.
\begin{theorem}
  If $p$ is even, we have the number of eigenvalues of $\Delta_p$ is finite.
\end{theorem}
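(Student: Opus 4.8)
The plan is simply to chain \cref{prop:tensor} together with the finiteness lemma for tensor eigenvalues quoted just above from \cite{CDN}. First I would observe that since $p$ is even and $p>1$ is assumed throughout the paper, we have $p\geq 2$, so the hypotheses of \cref{prop:tensor} are met. That proposition then tells us that the set of eigenvalues of $\Delta_p$ coincides exactly with the set of $\mathcal{B}^{(p)}$-eigenvalues of $\mathcal{T}^{(p)}$, where $\mathcal{T}^{(p)}$ and $\mathcal{B}^{(p)}$ are the two symmetric $p$-th order $n$-dimensional tensors defined in \eqref{eq:tensor} and \eqref{eq:tensor2}.

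Next I would apply the lemma above: because $\mathcal{T}^{(p)}$ and $\mathcal{B}^{(p)}$ are both symmetric tensors, there are only finitely many $\mathcal{B}^{(p)}$-eigenvalues of $\mathcal{T}^{(p)}$. Combining this with the identification from \cref{prop:tensor} immediately yields that $\Delta_p$ has only finitely many eigenvalues.

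There is essentially no obstacle here beyond bookkeeping. The only points requiring care are confirming that the parity and size conditions on $p$ line up between the two results being combined (even and $>1$ forces $p\geq 2$), and noting that the tensors constructed in \cref{prop:tensor} are indeed symmetric, so that the cited lemma is applicable to them — but symmetry is already part of the statement of \cref{prop:tensor}.
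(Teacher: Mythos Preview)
Your proposal is correct and matches the paper's approach exactly: the paper states the theorem with the one-line justification ``We have the following fact by the above lemma and \cref{prop:tensor},'' which is precisely the chaining you describe. Your additional remarks about $p\geq 2$ and the symmetry of the tensors are the natural checks one makes when writing this out in detail.
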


According to \cref{prop:tensor}, we can apply any algorithms designed to compute tensor eigenpairs, such as the shifted power method \cite{tam} and the Newton correction method \cite{ari}, to compute the eigenpairs of the signed graph $p$-Laplacian, provided $p$ is even. We adapt, however, the algorithm in \cite{CDN} as an approach to computing eigenpairs of $p$-Laplacian for its capabilities to compute all eigenpairs simultaneously.
 \begin{example}\label{ex:nodal}
  Let $\Gamma=(G,\sigma)$  be defined as follows, where $G=(V,E,w,\mu,\kappa)$ and
  \begin{align*}
    \left\{
    \begin{aligned}
      &V=\{1,2,3,4\}, ~ E=\big\{\{1,2\},\{2,3\},\{3,4\},\{1,4\}\big\},\\
      &  w_{12}=w_{23}=w_{34}=1 \text{ and } w_{14}=2,\\
      &  \sigma_{12}=\sigma_{23}=\sigma_{34}=+1 \text{ and } \sigma_{14}=-1,\\
      &  \mu_2=\mu_3=\mu_4=1  \text{ and } \mu_{1}=2,\\
      &  \kappa_1=\kappa_2=\kappa_3=1  \text{ and } \kappa_{4}=2.\\
    \end{aligned}
    \right.
\end{align*}
% We compute the all eigenvalues of $\Delta_p^{\Gamma}$ as follows:
The total 12 eigenpairs of $\Delta_p$ are shown in \cref{tab:k4} when $p = 4$.
\end{example}

\begin{table}[!t]
  \centering
  \caption{All eigenpairs of $\Delta_4$ for signed $C_4$.}
  \begin{tabular}{|c|c|c|c|c|c|c|}
    \hline
    No. & 1 & 2 & 3 & 4 & 5 & 6 \\
    \hline
    $\lambda$ & 1.686e1 & 1.668e1 & 1.631e1 & 1.463e1 & 1.448e1 & 1.369e1 \\
    \hline
    $f_1$ & 5.568e-1 & 6.186e-1 & 5.952e-1 & 6.693e-1 & 6.796e-1 & -3.717e-1 \\
    $f_2$ & 4.556e-1 & -4.107e-1 & -1.066e-2 & -6.355e-1 & -5.469e-1 & 7.471e-1 \\
    $f_3$ & -6.894e-1 & -5.780e-1 & -6.164e-1 & 4.493e-1 & 1.644e-1 & -8.258e-1 \\
    $f_4$ & 8.567e-1 & 8.678e-1 & 8.818e-1 & 7.927e-1 & 8.337e-1 & 6.560e-1 \\
    \hline
    No. & 7 & 8 & 9 & 10 & 11 & 12 \\
    \hline
    $\lambda$ & 1.344e1 & 1.162e1 & 1.608 & 1.364 & 1.301 & 7.047e-1 \\
    \hline
    $f_1$ & -1.032e-1 & 5.070e-1 & -5.715e-1 & 2.339e-1 & 2.953e-3 & 8.298e-1 \\
    $f_2$ & 6.907e-1 & -8.595e-1 & 1.737e-1 & 8.174e-1 & 5.181e-1 & 3.946e-1 \\
    $f_3$ & -8.416e-1 & 7.533e-1 & 9.169e-1 & 8.588e-1 & 9.742e-1 & -6.080e-3 \\
    $f_4$ & 7.211e-1 & 5.107e-2 & 5.296e-1 & 2.459e-1 & 4.060e-1 & -4.067e-1 \\
    \hline
  \end{tabular}
  \label{tab:k4}
\end{table}

\begin{remark}
  For the case of linear Laplacian, given any two eigenpairs $(\lambda,f)$ and $(\mu,g)$, we have $\lambda=\mu$ if $f_i g_i>0$ for any $i\in V$. Note that this conclusion does not hold anymore for general $p$-Laplacian. From \cref{ex:nodal}, we have $f^{(2)}_i f^{(3)}_i>0$ for any $i\in V$ but $\lambda_2 \neq \lambda_3$, where $f^{(2)}$ and $f^{(3)}$ are the eigenfunctions corresponding to $\lambda_2$ and $\lambda_3$, respectively. 
\end{remark}

\section{An algorithm based on Perron--Frobenius type theorem}\label{sec:-1}
In this section, we propose an algorithm for computing the largest eigenvalue and corresponding eigenfunction of the $p$-Laplacian of $\Gamma=(G,\sigma)$ with $\sigma\equiv-1$, and prove its convergence.

Given any signed graph $\Gamma=(G,\sigma)$ with $G=(V,E,w,\mu,\kappa)$, if there exists $i\in V$, such that $\kappa_i < 0$, we define $c=\max_{i\in V}\left|\frac{\kappa_i}{\mu_i}\right|$ and a new signed graph $\Gamma'=(G',\sigma')$ with  $G'=(V,E,w,\mu,\kappa')$ where $\kappa'_i=\kappa_i+c\mu_i$ and $\sigma'\equiv\sigma.$ By direct computation, we have $\kappa'_i\geq 0$ for any $i\in V$ and $\lambda_{\max}(\Delta_p^{\Gamma})=\lambda_{\max}(\Delta_p^{\Gamma'})-c$. Therefore, if an algorithm can successfully compute the largest eigenvalue and the corresponding eigenfunction of the $p$-Laplacian for signed graphs with a nonnegative potential function, it can also be applied to solve the general case of signed graphs. Without loss of generality, we assume $\kappa_i\geq 0$ for any $i\in V$ in the following analysis of the algorithm.

% We fix a connected signed graph $\Gamma=(G,\sigma)$ with $G=(V,E,w,\mu,\kappa)$, $\sigma\equiv -1$, and $\kappa_i\geq 0$ for any $i\in V$ in the following analysis of the algorithm. 

The following Perron--Frobenius type theorem plays an important role in our analysis \cite{GLZ23}.

\begin{theorem}\label{thm:pr}
Let $\Gamma=(G,\sigma)$  be a signed graph with $G=(V,E,w,\mu,\kappa)$ and  $\sigma\equiv -1$. Assume $G$ is connected.  Given an eigenfunction $f$ of $\Delta_p$ corresponding to the largest eigenvalue $\lambda_{\max}$, we have the following properties:
	\begin{itemize}
    \item [(i)] Either $f_i>0$ for any $i\in V$ or $f_i<0$ for any $i\in V$;
    \item [(ii)] If $g$ is an eigenfunction of $\Delta_p$ corresponding to $\lambda_{\max}$, then there exists a constant $c\in \mathbb{R}\setminus \{0\}$ such that $g=cf$;	 
    \item [(iii)]If $g$ is an eigenfunction of $\Delta_p$ corresponding to an eigenvalue $\lambda$ with $g_i> 0$ or $g_i< 0$ for any $i\in V$, then $\lambda=\lambda_{\max}$.
	\end{itemize}
\end{theorem}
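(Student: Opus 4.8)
The plan is to prove the three properties in the order (i), then (iii), then (ii), since (iii) essentially reuses the mechanism developed for (i), and (ii) is a simplicity statement that follows once positivity is in hand. Throughout, the key structural feature to exploit is that with $\sigma\equiv-1$, the eigenvalue equation reads $\sum_{j\sim i}w_{ij}\Phi_p(f_i+f_j)+\kappa_i\Phi_p(f_i)=\lambda\mu_i\Phi_p(f_i)$, so that the off-diagonal terms involve $\Phi_p(f_i+f_j)$ rather than $\Phi_p(f_i-f_j)$; this is the ``signless'' sign pattern that makes a Perron--Frobenius argument possible. The natural variational hook is $\lambda_{\max}=\max_{f\in\mathcal{S}_p}\mathcal{R}_p(f)$, and the crucial observation is that $\mathcal{R}_p(f)\le\mathcal{R}_p(|f|)$, where $|f|=(|f_1|,\dots,|f_n|)$, because $|f_i+f_j|\le |f_i|+|f_j|=\big||f_i|-\sigma_{ij}|f_j|\big|$ when $\sigma_{ij}=-1$ (here one uses $p>1$ so $t\mapsto t^p$ is increasing on $[0,\infty)$), while the denominator and the $\kappa$-term are unchanged. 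Hence if $f$ is a maximizer then so is $|f|$, and $|f|$ is a nonnegative eigenfunction.

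For part (i): having replaced $f$ by $|f|\ge 0$, I would show $|f|>0$ strictly. Suppose some coordinate vanishes, say $|f|_{i_0}=0$. Plugging $i=i_0$ into the eigenvalue equation for $|f|$ gives $\sum_{j\sim i_0}w_{i_0 j}\Phi_p(|f|_{i_0}+|f|_j)+\kappa_{i_0}\Phi_p(|f|_{i_0})=\lambda_{\max}\mu_{i_0}\Phi_p(|f|_{i_0})=0$, i.e. $\sum_{j\sim i_0}w_{i_0 j}|f|_j^{\,p-1}=0$ (using $|f|\ge 0$). Since all $w_{i_0 j}>0$, this forces $|f|_j=0$ for every neighbor $j$ of $i_0$; by connectedness of $G$ an induction along paths propagates $|f|\equiv 0$, contradicting that an eigenfunction is nonzero. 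Therefore $|f|>0$ everywhere. Finally, to return to the original $f$: I claim $f$ itself is already everywhere positive or everywhere negative. The point is that $\mathcal{R}_p(f)=\mathcal{R}_p(|f|)$ can only hold if every summand $|f_i+f_j|^p$ already equals $(|f_i|+|f_j|)^p$ on each edge, which (again via strict convexity / the strict triangle inequality with $p>1$, or directly: equality in $|a+b|\le|a|+|b|$ with $a,b\ne 0$) forces $f_i$ and $f_j$ to have the same sign on every edge; connectedness then gives a single global sign. (Alternatively one bypasses this by noting that if $f$ had a zero coordinate the same propagation argument applied to $f$ directly, after the reduction, yields $f\equiv 0$; and if $f$ changed sign one compares with $|f|$.) I expect this sign-propagation step to be the main technical obstacle, because one has to handle carefully the degenerate cases where $f_i=0$ or $f_i+f_j=0$; the cleanest route is to first establish that the absolute-value function is a positive eigenfunction and then argue any maximizer must be a scalar multiple of it, which is exactly part (ii).

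For part (ii): let $f>0$ be the positive eigenfunction produced above and let $g$ be any eigenfunction for $\lambda_{\max}$. By part (i) applied to $g$, WLOG $g>0$ (replace $g$ by $-g$ if needed). The standard device is a Collatz--Wielandt / log-convexity comparison: set $t^*=\max_{i\in V} g_i/f_i$ and consider $h=t^*f-g\ge 0$, with $h_{i_0}=0$ for the index $i_0$ achieving the max. Both $t^* f$ and $g$ satisfy the eigenvalue equation (the equation is homogeneous of degree $p-1$, so $t^*f$ is still an eigenfunction). I would write the equation at $i_0$ for both and subtract, exploiting that $\Phi_p$ is strictly monotone and that $t^* f_j\ge g_j$ for all $j$ with equality iff $h_j=0$: the $i_0$-equation becomes $\sum_{j\sim i_0}w_{i_0j}\big(\Phi_p(t^*f_{i_0}+t^*f_j)-\Phi_p(g_{i_0}+g_j)\big)=0$ after the $\kappa$ and $\lambda\mu$ terms cancel (since $t^*f_{i_0}=g_{i_0}$), and each bracket is $\ge 0$ with equality forcing $t^*f_j=g_j$, i.e. $h_j=0$ on the neighborhood of $i_0$; connectedness propagates $h\equiv 0$, so $g=t^*f$, giving $c=t^*$. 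Part (ii) then also shows $|f|$ and $f$ differ by a scalar, retroactively cleaning up part (i).

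For part (iii): suppose $(\lambda,g)$ is an eigenpair with, WLOG, $g>0$. I want $\lambda=\lambda_{\max}$. The quickest argument: $\lambda\le\lambda_{\max}$ automatically. For the reverse, I would use a testing/monotonicity inequality. Writing the eigenvalue equation for $g$ at each $i$ as $\lambda\mu_i g_i^{p-1}=\sum_{j\sim i}w_{ij}(g_i+g_j)^{p-1}+\kappa_i g_i^{p-1}$, compare with the positive $\lambda_{\max}$-eigenfunction $f>0$ via the Collatz--Wielandt bound: for any $h>0$, $\min_i \frac{(\Delta_p h)_i}{\mu_i \Phi_p(h_i)}\le\lambda_{\max}$ fails the wrong way, so instead use $\max_i \frac{(\Delta_p h)_i}{\mu_i h_i^{p-1}}\ge\lambda_{\max}$ — applied to $h=g$ this gives $\lambda\ge\lambda_{\max}$. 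Making that precise: let $s^*=\max_i f_i/g_i$, attained at $i_1$, so $s^*g\ge f$ with equality at $i_1$. Plug $i_1$ into the $\lambda_{\max}$-equation for $f$ and the $\lambda$-equation for $s^*g$; subtracting and using $s^*g_{i_1}=f_{i_1}$, $s^*g_j\ge f_j$, monotonicity of $\Phi_p$, and $\kappa_{i_1}\ge 0$ yields $\lambda_{\max}\mu_{i_1}f_{i_1}^{p-1}=\sum_{j\sim i_1}w_{i_1j}(f_{i_1}+f_j)^{p-1}+\kappa_{i_1}f_{i_1}^{p-1}\le\sum_{j\sim i_1}w_{i_1j}(s^*g_{i_1}+s^*g_j)^{p-1}+\kappa_{i_1}(s^*g_{i_1})^{p-1}=\lambda (s^*g_{i_1})^{p-1}\mu_{i_1}=\lambda\mu_{i_1}f_{i_1}^{p-1}$, hence $\lambda_{\max}\le\lambda$, so $\lambda=\lambda_{\max}$. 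I expect the bookkeeping of which inequality points which way (Collatz--Wielandt $\max$ versus $\min$) to be the subtle point in (iii), but once the comparison index is chosen correctly the monotonicity of $\Phi_p$ and nonnegativity of $\kappa$ do all the work.
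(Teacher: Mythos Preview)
The paper does not prove this theorem at all: it is quoted as a known Perron--Frobenius type result and attributed to reference \cite{GLZ23} (see the sentence introducing \cref{thm:pr}). So there is no in-paper proof to compare against; your proposal is an attempt to supply an argument the paper intentionally omits.

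On its own merits your outline is sound and follows the classical Perron--Frobenius pattern. Part (i) via $\mathcal{R}_p(f)\le\mathcal{R}_p(|f|)$ and edge-propagation is correct; note this works for arbitrary $\kappa$ since the $\kappa$-term and the denominator are identical for $f$ and $|f|$, and at a zero vertex the $\kappa$-term vanishes regardless of sign. Your part (iii) Collatz--Wielandt comparison is also correct, but you do not actually need the hypothesis $\kappa_{i_1}\ge 0$ you invoke: at the comparison index $i_1$ one has $f_{i_1}=s^*g_{i_1}$, so the $\kappa_{i_1}$ contributions on both sides are literally equal and cancel, not merely ordered. This matters because the theorem as stated does not assume $\kappa\ge 0$ (the paper only imposes that assumption later, for the algorithm). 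Part (ii) is correct as written; the $\kappa$ and $\lambda\mu$ terms again cancel exactly at the touching index, and strict monotonicity of $\Phi_p$ on positives gives the propagation. Your ordering $(i)\Rightarrow(iii)\Rightarrow(ii)$ is natural; the paper itself uses exactly the zero-propagation step of your (i) when it later sharpens (iii) to allow $g\ge 0$ in \cref{lemma:pr}, and uses the Collatz--Wielandt mechanism of your (iii) in \cref{lemma:com}.
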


\cref{thm:pr} $(iii)$ can be improved. Exactly, we have the following lemma.
\begin{lemma}\label{lemma:pr}
  Let $\Gamma$, $f$, $\lambda_{\max}$ be defined as in \cref{thm:pr} and $(\lambda,g)$ be an eigenpair of $\Delta_p$. If $g_i\geq 0$ for any $i\in V$ or $g_i\leq 0$ for any $i\in V$, we have $\lambda=\lambda_{\max}$.
\end{lemma}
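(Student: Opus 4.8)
The plan is to reduce the statement to the strict‑positivity case already covered by \cref{thm:pr}(iii). First I would make two harmless normalizations. Since $\Phi_p$ is odd, $-g$ is again an eigenfunction of $\Delta_p$ for the same eigenvalue $\lambda$, so after possibly replacing $g$ by $-g$ I may assume $g_i\ge 0$ for all $i\in V$; by \cref{thm:pr}(i) I may likewise assume $f_i>0$ for all $i$. It then suffices to prove that $g$ has no vanishing coordinate: once $g_i>0$ for every $i$, \cref{thm:pr}(iii) immediately gives $\lambda=\lambda_{\max}$.

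To prove $g>0$, I would study the zero set $Z:=\{i\in V: g_i=0\}$ and show that it is closed under adjacency, i.e.\ if $i\in Z$ then every neighbour of $i$ also lies in $Z$. This is where the hypothesis $\sigma\equiv-1$ is used essentially. Evaluating the eigenequation at a vertex $i\in Z$ and using $g_i=0$ together with $\Phi_p(0)=0$, both the left‑hand side $\lambda\Phi_p(g_i)$ and the potential contribution $\kappa_i\Phi_p(g_i)$ vanish; and since $\sigma_{ij}=-1$ turns $g_i-\sigma_{ij}g_j$ into $g_i+g_j=g_j\ge 0$, one is left with $\sum_{j\sim i}w_{ij}\,g_j^{\,p-1}=0$, a sum of nonnegative terms with positive weights $w_{ij}$. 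Hence $g_j=0$ for every $j\sim i$, that is $j\in Z$. Because $G$ is connected, $Z$ is therefore either $\emptyset$ or all of $V$; the latter would force $g\equiv 0$, contradicting that an eigenfunction is nonzero. So $Z=\emptyset$, meaning $g_i>0$ for all $i$ (respectively $g_i<0$ for all $i$ before the normalization), and \cref{thm:pr}(iii) completes the argument.

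I do not expect a serious obstacle: the content of the lemma is essentially that a nonnegative eigenfunction of the signless $p$-Laplacian cannot have a ``nodal boundary'' inside a connected graph, and the proof is a short propagation‑of‑zeros argument. The only point requiring care is the sign bookkeeping in the eigenequation — in particular, that the argument genuinely needs $\sigma\equiv-1$ (for a general signature the terms $w_{ij}\Phi_p(-\sigma_{ij}g_j)$ need not share a common sign, so vanishing of their sum would not force each to vanish), and that the convention $\Phi_p(0)=0$, valid since $p>1$, is what disposes of the potential term $\kappa_i\Phi_p(g_i)$.
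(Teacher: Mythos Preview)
Your proposal is correct and follows essentially the same approach as the paper: reduce to the nonnegative case, evaluate the eigenequation at a zero of $g$ to obtain $\sum_{j\sim i}w_{ij}\Phi_p(g_j)=0$, use nonnegativity and positivity of the weights to propagate the zero to all neighbours, invoke connectedness to conclude $g\equiv 0$ (a contradiction), and then apply \cref{thm:pr}(iii). Your write-up is slightly more explicit about the sign bookkeeping and the role of $\Phi_p(0)=0$, but there is no substantive difference.
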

\begin{proof}
  We assume $g_i\geq 0$  for any $i\in V$. Assume there exists $i_0\in V$ such that $g_{i_0}=0$. By definition of eigenpair, we have $\sum_{j\sim i_0}w_{i_0j}\Phi_p\left(g_j\right)=0.$ Since $g_i\geq 0$ for any $i\in V$, we have $g_j=0$ for any $j\sim i_0$. Then we have $g\equiv0$ by induction. This contradicts the definition of eigenpair. This implies $g_i> 0$  for any $i\in V$. By \cref{thm:pr} $(iii)$, we have $\lambda=\lambda_{\max}$. The proof of the case $g_i\leq 0$ for any $i\in V$ is the same.
\end{proof}

The following lemma, which can be derived through direct computation, is instrumental for our analysis.
\begin{lemma}\label{lemma:basic}
  Let $\Gamma=(G,\sigma)$ be a signed graph with $G=(V,E,w,\mu,\kappa)$, $\sigma\equiv-1$ and $\kappa_i\geq 0$ for any $i\in V$. If $f,g\in \mathbb{P}^n$ satisfy $f\leq g$, we have $$\Delta_p(f)\leq \Delta_p(g).$$
\end{lemma}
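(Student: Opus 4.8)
The plan is to reduce the whole statement to the monotonicity of the scalar map $\Phi_p$. First I would unfold the definition of $\Delta_p$ using the hypothesis $\sigma\equiv-1$: for every edge $\{i,j\}\in E$ we have $\sigma_{ij}=-1$, so $f_i-\sigma_{ij}f_j=f_i+f_j$, and hence
\[
  (\Delta_p f)_i=\frac{1}{\mu_i}\left(\sum_{j\sim i}w_{ij}\Phi_p(f_i+f_j)+\kappa_i\Phi_p(f_i)\right),\qquad i\in V.
\]
The role of the signless convention is precisely that only \emph{sums} of coordinates (never differences) enter the arguments of $\Phi_p$, so there is no cancellation that could reverse an inequality.

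Next I would record the elementary fact that $\Phi_p$ is nondecreasing on $\mathbb{R}$ when $p>1$, which is immediate from $\Phi_p'(t)=(p-1)|t|^{p-2}\geq 0$ (or by a direct sign check at $t\gtrless 0$ and continuity at $0$). Since $f\leq g$ means $f_i\leq g_i$ for every $i\in V$, and $f,g\in\mathbb{P}^n$ gives $0\leq f_i\leq g_i$, for each edge $\{i,j\}$ we get $0\leq f_i+f_j\leq g_i+g_j$, hence $\Phi_p(f_i+f_j)\leq\Phi_p(g_i+g_j)$, and likewise $\Phi_p(f_i)\leq\Phi_p(g_i)$.

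Finally I would assemble the estimate termwise: multiplying by the coefficients $w_{ij}>0$ and $\kappa_i\geq 0$, summing over $j\sim i$, and dividing by $\mu_i>0$ yields $(\Delta_p f)_i\leq(\Delta_p g)_i$ for every $i\in V$, i.e.\ $\Delta_p(g)-\Delta_p(f)\in\mathbb{P}^n$, which is exactly the claim $\Delta_p(f)\leq\Delta_p(g)$. The only hypotheses that actually do any work are $\sigma\equiv-1$ (which converts differences into sums) and $\kappa_i\geq 0$ (so the potential term preserves rather than reverses the order); accordingly there is essentially no obstacle here — once the first step is made, the argument is just coordinatewise monotonicity of a scalar function, and the lemma is indeed a direct computation as claimed.
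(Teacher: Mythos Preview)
Your proof is correct and is precisely the ``direct computation'' the paper alludes to: unfold $\Delta_p$ with $\sigma\equiv-1$, use that $\Phi_p$ is nondecreasing, and combine termwise using $w_{ij}>0$, $\kappa_i\ge 0$, $\mu_i>0$. There is nothing to add.
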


Let $f$ be an eigenfunction of $\Delta_p$ corresponding to $\lambda_{\max}$ in the following of this section. Moreover, we assume  $\Vert f\Vert=1$ and $f\in \mathrm{int}(\mathbb{P}^n)$. The reason that $f\in \mathrm{int}(\mathbb{P}^n)$ is justified by \cref{thm:pr}.

We fix any $f^{(0)}\in \mathrm{int}(\mathbb{P}^n)$ and define $g^{(0)}=\Delta_pf^{(0)}$. For any $k\in \mathbb{Z}^+$, we inductively define
$$f^{(k)}=\frac{(g^{(k-1)})^{\frac{1}{p-1}}}{\Vert (g^{(k-1)})^{\frac{1}{p-1}}\Vert},\quad\quad\quad g^{(k)}=\Delta_pf^{(k)},$$
and
\begin{align}
\label{minmaxlambda}
\underline{\lambda}_{k}=\min_{i\in V}\frac{\left(\Delta_pf^{(k)}\right)_i}{\left(f_i^{(k)}\right)^{p-1}},\quad\quad\quad\overline{\lambda}_{k}=\max_{i\in V}\frac{\left(\Delta_pf^{(k)}\right)_i}{\left(f_i^{(k)}\right)^{p-1}}.
\end{align}
It is worth noting that for any $k$, $f^{(k)}$ and $g^{(k)}$ are both in $\mathrm{int}(\mathbb{P}^n)$ by directly computing. This implies both $\underline{\lambda}_{k}$ and $\overline{\lambda}_{k}$ are well-defined.

\begin{theorem}\label{thm:main}
For any integer $k\geq 1$, we have 
\begin{itemize}
          \item [(i)] $\underline{\lambda}_{k}\leq \underline{\lambda}_{k+1}$ and $\overline{\lambda}_{k+1}\leq \overline{\lambda}_{k}$,
          \item [(ii)]  $\underline{\lambda}_{k}\leq \lambda_{\max}\leq \overline{\lambda}_{k}$,
	   \item [(iii)] $f^{(k)}$ are convergent and $$\lim_{k\to \infty}f^{(k)}=f,\quad\quad\quad\lim_{k\to\infty}\underline{\lambda}_{k}=\lim_{k\to\infty}\overline{\lambda}_{k}=\lambda_{\max}.$$
	\end{itemize}
\end{theorem}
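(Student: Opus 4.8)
The plan is to prove the three assertions in the order (i), (ii), (iii), since monotonicity feeds the bounding property, and both feed the convergence. The fundamental tool throughout is the monotonicity of $\Delta_p$ on the nonnegative cone, \cref{lemma:basic}, together with the $1$-homogeneity of $\Delta_p$ (i.e.\ $\Delta_p(t f) = t^{p-1}\Delta_p(f)$ for $t>0$). First, for (i), I would observe that by definition $\underline{\lambda}_k (f_i^{(k)})^{p-1} \le (\Delta_p f^{(k)})_i = g^{(k)}_i$ for every $i$, which says $\underline{\lambda}_k (f^{(k)})^{p-1} \le g^{(k)}$ coordinatewise, i.e.\ $\underline{\lambda}_k^{1/(p-1)} f^{(k)} \le (g^{(k)})^{1/(p-1)}$. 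Normalizing and applying \cref{lemma:basic} to the monotone map $\Delta_p$ gives $\underline{\lambda}_k \Delta_p(f^{(k)}) \le \Delta_p\!\big((g^{(k)})^{1/(p-1)}\big) = \Vert (g^{(k)})^{1/(p-1)}\Vert^{p-1} \Delta_p(f^{(k+1)})$ by homogeneity; dividing by $(f^{(k+1)}_i)^{p-1}$ on the side where the definitions of $f^{(k+1)}$ line up and taking the min over $i$ yields $\underline{\lambda}_k \le \underline{\lambda}_{k+1}$. The argument for $\overline{\lambda}_{k+1}\le\overline{\lambda}_k$ is the mirror image with inequalities reversed and $\min$ replaced by $\max$.

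For (ii), the upper bound $\lambda_{\max}\le\overline{\lambda}_k$ and lower bound $\underline{\lambda}_k\le\lambda_{\max}$ I would get from the variational characterization $\lambda_{\max}(\Delta_p)=\max_{h\in\mathcal S_p}\mathcal R_p(h)$ combined with a Collatz--Wielandt type squeeze: pairing $\underline{\lambda}_k (f^{(k)})^{p-1} \le \Delta_p f^{(k)} \le \overline{\lambda}_k (f^{(k)})^{p-1}$ against $f^{(k)}$ itself (taking the inner product $\sum_i \mu_i (\cdot)_i f^{(k)}_i$, or rather against $f^{(k)}$ in the natural pairing that turns $\sum_i \mu_i (\Delta_p f)_i f_i$ into the numerator of $\mathcal R_p$) shows $\underline{\lambda}_k \le \mathcal R_p(f^{(k)}) \le \overline{\lambda}_k$, and then $\mathcal R_p(f^{(k)})\le\lambda_{\max}$ gives one half. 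For the other half I would test $\underline{\lambda}_k (f^{(k)})^{p-1}\le\Delta_p f^{(k)}$ against the positive eigenfunction $f$ of $\lambda_{\max}$ and use the eigen-equation $\Delta_p f = \lambda_{\max} f^{p-1}$ plus a symmetry/adjointness of the quadratic-type form $\sum_{\{i,j\}\in E} w_{ij}|f_i+f_j|^{\,p-2}(f_i+f_j)(h_i+h_j) + \sum_i \kappa_i \Phi_p(f_i) h_i$ to conclude $\underline{\lambda}_k \sum_i \mu_i (f^{(k)}_i)^{p-1} f_i \le \lambda_{\max} \sum_i \mu_i (f^{(k)}_i)^{p-1} f_i$, whence $\underline{\lambda}_k \le \lambda_{\max}$; alternatively one may appeal directly to the Perron--Frobenius Collatz--Wielandt formula if that is available. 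Either way (ii) follows.

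For (iii), monotone bounded sequences $\underline{\lambda}_k \uparrow$ and $\overline{\lambda}_k \downarrow$ converge to limits $\underline\lambda \le \lambda_{\max} \le \overline\lambda$. To show these limits coincide and equal $\lambda_{\max}$, I would extract a convergent subsequence of the bounded sequence $\{f^{(k)}\}\subset\mathcal S_2\cap\mathbb P^n$ (compactness of the sphere), say $f^{(k_j)}\to f^\ast\in\mathbb P^n$ with $\Vert f^\ast\Vert=1$. Continuity of $\Delta_p$ gives $g^{(k_j)}\to \Delta_p f^\ast$, and from the recursion $f^{(k_j+1)} = (g^{(k_j)})^{1/(p-1)}/\Vert\cdot\Vert$ one passes to the limit. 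The crucial point is that in the limit the ratio $(\Delta_p f^\ast)_i / (f^\ast_i)^{p-1}$ must be constant in $i$: indeed $\underline\lambda_{k_j} \le (\Delta_p f^{(k_j)})_i/(f^{(k_j)}_i)^{p-1} \le \overline\lambda_{k_j}$ — here I need to first argue $f^\ast \in \mathrm{int}(\mathbb P^n)$ so the ratios stay bounded, which should follow from the irreducibility used in \cref{thm:pr} (if $f^\ast_{i_0}=0$ then the defining relation forces $f^\ast_j=0$ for neighbors, contradicting $\Vert f^\ast\Vert=1$, exactly as in the proof of \cref{lemma:pr}). Given $f^\ast\in\mathrm{int}(\mathbb P^n)$, if $\underline\lambda<\overline\lambda$ were strict, passing to the limit would still leave a nondegenerate gap, but one shows the iteration strictly contracts the gap unless $f^{(k)}$ is already an eigenfunction — this is the standard power-method argument: if $f^\ast$ is not an eigenfunction, then $\underline\lambda_{k_j+1} > \underline\lambda_{k_j}$ strictly by the same monotonicity computation as in (i) (strict because the comparison inequality is strict somewhere and $\Delta_p$ strictly increases strict inequalities on a connected graph), contradicting convergence of the whole sequence to $\underline\lambda$. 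Hence $f^\ast$ is an eigenfunction of $\Delta_p$ with $f^\ast\in\mathrm{int}(\mathbb P^n)$, so by \cref{lemma:pr} its eigenvalue is $\lambda_{\max}$, forcing $\underline\lambda = \overline\lambda = \lambda_{\max}$; and by \cref{thm:pr}(ii) together with the normalization $\Vert f^\ast\Vert = \Vert f\Vert = 1$ and positivity, $f^\ast = f$. Finally, since every subsequential limit is the same point $f$, the full sequence $f^{(k)}$ converges to $f$.

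I expect the main obstacle to be the strict-contraction step in (iii): proving that the gap $\overline\lambda_k - \underline\lambda_k$ does not stall at a positive value, i.e.\ that a non-eigenfunction $f^\ast$ cannot be a fixed point of the normalized iteration. This requires making precise the claim that $\Delta_p$ \emph{strictly} increases a coordinatewise inequality that is strict in at least one coordinate (a strong-monotonicity / irreducibility statement for the nonlinear operator on a connected graph), and then translating a strict improvement at the limit point into a contradiction with the convergence of the monotone scalar sequences. A secondary but more routine technical point is justifying $f^\ast\in\mathrm{int}(\mathbb P^n)$ and the adjointness identity used in the lower bound of (ii); both mimic arguments already present in \cref{lemma:pr} and in the proof of \cref{prop:tensor}.
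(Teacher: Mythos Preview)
Your outline for (i) matches the paper's proof. For (ii), your Rayleigh-quotient pairing $\underline{\lambda}_k \le \mathcal{R}_p(f^{(k)}) \le \lambda_{\max}$ is correct and in fact simpler than what the paper does for the lower bound, but pairing cannot give the upper bound $\lambda_{\max}\le\overline{\lambda}_k$ (it only yields $\mathcal{R}_p(f^{(k)})\le\overline{\lambda}_k$, which points the wrong way). Your ``adjointness'' route does not work either, since $\sum_i \mu_i(\Delta_p h)_i g_i$ is not symmetric in $h$ and $g$ when $p\ne 2$. The paper proves a Collatz--Wielandt lemma (\cref{lemma:com}): if $g\in\mathbb{P}^n\setminus\{0\}$ satisfies $\Delta_p g \ge \eta\, g^{p-1}$ then $\eta\le\lambda_{\max}$, via $t_0 = \max\{t:f-tg\ge 0\}$; this and its mirror are exactly the ``Collatz--Wielandt formula'' you mention as a fallback, and are what is actually needed.

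The genuine gap is in (iii). Your proposed contradiction --- ``$\underline{\lambda}_{k_j+1} > \underline{\lambda}_{k_j}$ strictly, contradicting convergence of the whole sequence to $\underline{\lambda}$'' --- is not a contradiction: a strictly increasing bounded sequence does converge. What must be produced is $\underline{\lambda}_{k_0+m} > \underline{\lambda}$ for some finite $k_0, m$, i.e.\ an iterate strictly exceeding the \emph{limit}, not merely the previous term. The paper obtains this in two steps. First (\cref{lemma:increasing}), a strict-propagation lemma for $\mathcal{D}(h) := (\Delta_p h)^{1/(p-1)}$: if $h \le g$ in $\mathbb{P}^n$ with $h\ne g$, then connectedness of $G$ gives an $r_0$ with $\mathcal{D}^{r_0} h < \mathcal{D}^{r_0} g$ strictly on \emph{every} coordinate. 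Second, one shows that the subsequential limit $f^\ast$ (which satisfies $\underline{\lambda}^{1/(p-1)} f^\ast \le \mathcal{D} f^\ast$ by passing to the limit) can never have $\underline{\lambda}^{1/(p-1)} \mathcal{D}^m f^\ast < \mathcal{D}^{m+1} f^\ast$ strictly on all coordinates for any $m$: if it did, continuity of $\mathcal D$ would push the strict inequality back to some $f^{(k_0)}$ in the subsequence, and since $\mathcal{D}^m f^{(k_0)}$ is a positive scalar multiple of $f^{(k_0+m)}$ one gets $\underline{\lambda}_{k_0+m} > \underline{\lambda}$, the real contradiction. Combining the two forces $\underline{\lambda}^{1/(p-1)} f^\ast = \mathcal{D} f^\ast$, so $f^\ast$ is a nonnegative eigenfunction; then \cref{lemma:pr} gives $\underline{\lambda}=\lambda_{\max}$ and \cref{thm:pr}(ii) gives $f^\ast=f$. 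Your closing paragraph correctly names both ingredients, but the argument as sketched in the body does not arrive at an inequality against the limit.

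A secondary point: your plan to first show $f^\ast\in\mathrm{int}(\mathbb{P}^n)$ ``exactly as in the proof of \cref{lemma:pr}'' does not transfer directly, since $f^\ast$ is not yet known to be an eigenfunction. It can be salvaged using the upper bound $(\Delta_p f^{(k)})_i \le \overline{\lambda}_1 (f^{(k)}_i)^{p-1}$ in the limit, but the paper avoids the issue entirely by working with vector inequalities rather than ratios and invoking \cref{lemma:pr} only after $f^\ast$ has been shown to be an eigenfunction.
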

We only prove $\underline{\lambda}_{k}\leq \underline{\lambda}_{k+1}\leq \lambda_{\max}$ and $\lim_{k\to \infty}\underline{\lambda}_{k}=\lambda_{\max}$, since the same techniques can be exploited to prove the case of $\overline{\lambda}_{k}$.

\begin{proof}[Proof of $(i)$ in \cref{thm:main}]
   By definition, for any $i\in V$, we have $$g_i^{(k)}= \left(\Delta_pf^{(k)}\right)_i\geq\underline{\lambda}_{k}\left(f_i^{(k)}\right)^{p-1}.$$ This implies
   $$\left(g_i^{(k)}\right)^{\frac{1}{p-1}}\geq \left(\underline{\lambda}_{k}\right)^{\frac{1}{p-1}}f_i^{(k)},$$
and $$f_i^{(k+1)}=\frac{\left(g_i^{(k)}\right)^{\frac{1}{p-1}}}{\Vert \left(g^{(k)}\right)^{\frac{1}{p-1}} \Vert}\geq \frac{\left(\underline{\lambda}_{k}\right)^{\frac{1}{p-1}}f_i^{(k)}}{\Vert \left(g^{(k)}\right)^{\frac{1}{p-1}} \Vert}.$$
Then we have $$f^{k+1}\geq \frac{\left(\underline{\lambda}_{k}\right)^{\frac{1}{p-1}}f^{(k)}}{\Vert \left(g^{(k)}\right)^{\frac{1}{p-1}} \Vert}.$$
For any $j\in V$, we compute 
\begin{align*}
  \left(\Delta_pf^{(k+1)}\right)_j \geq \frac{\underline{\lambda}_{k}\left(\Delta_p f^{(k)}\right)_j}{\Vert \left(g^{(k)}\right)^{\frac{1}{p-1}} \Vert^{p-1}}
  =\frac{\underline{\lambda}_{k}\left[\left(g^{(k)}_j\right)^\frac{1}{p-1}\right]^{p-1}}{\Vert \left(g^{(k)}\right)^{\frac{1}{p-1}} \Vert^{p-1}}
  =\underline{\lambda}_{k}\left(f_j^{(k+1)}\right)^{p-1},
\end{align*}
where the first inequality is by \cref{lemma:basic}. This implies that $\underline{\lambda}_{k+1}\geq \underline{\lambda}_{k}$, and the assertion in (i) follows.  
\end{proof}

To prove (ii), we need the following lemma.
\begin{lemma}\label{lemma:com}
    If there exist $g\in \mathbb{P}^n\setminus \{0\}$ such that $\Delta_pg\geq  \eta g^{p-1}$, we have $\eta\leq \lambda_{\max}$.
    %  If there exist $g\in \mathrm{int}(\mathbb{P}^n)$ such that $\Delta_pg\leq \eta  g^{p-1}$, we have $\eta\geq \lambda$.
\end{lemma}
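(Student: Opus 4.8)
The plan is to reduce the claim to the variational expression $\lambda_{\max}=\max_{h\in\mathcal{S}_p}\mathcal{R}_p(h)$ recorded earlier. Since $\mathcal{R}_p$ is invariant under rescaling of its argument, this maximum equals $\sup\{\mathcal{R}_p(h):h\in\mathbb{R}^n\setminus\{0\}\}$, so it is enough to exhibit a single nonzero test function $h$ with $\mathcal{R}_p(h)\geq\eta$. The obvious candidate is $g$ itself, and the entire proof comes down to verifying $\mathcal{R}_p(g)\geq\eta$.

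To get this, I would pair the coordinatewise inequality $\Delta_p g\geq\eta g^{p-1}$ against $g$ in the $\mu$-weighted inner product rather than taking coordinatewise ratios. For each $i\in V$ the hypothesis gives $(\Delta_p g)_i-\eta g_i^{p-1}\geq 0$; multiplying by $\mu_i g_i\geq 0$ — here only $g\in\mathbb{P}^n$ is used, no strict positivity — and summing over $i$ yields
\begin{equation*}
\sum_{i\in V}\mu_i g_i(\Delta_p g)_i\;\geq\;\eta\sum_{i\in V}\mu_i g_i^{p}=\eta\,\|g\|_p^p,
\end{equation*}
and $\|g\|_p>0$ because $g\neq 0$. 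It then remains to identify the left-hand side with the numerator of $\mathcal{R}_p(g)$: using $\sigma\equiv-1$, so that $g_i-\sigma_{ij}g_j=g_i+g_j$, together with the elementary identity $t\,\Phi_p(t)=|t|^p$ and the fact that each edge is collected from its two endpoints, a discrete summation by parts gives
\begin{equation*}
\sum_{i\in V}\mu_i g_i(\Delta_p g)_i=\sum_{i\in V}g_i\Big(\sum_{j\sim i}w_{ij}\Phi_p(g_i+g_j)+\kappa_i\Phi_p(g_i)\Big)=\sum_{\{i,j\}\in E}w_{ij}|g_i+g_j|^p+\sum_{i\in V}\kappa_i|g_i|^p.
\end{equation*}
Dividing the displayed inequality by $\|g\|_p^p>0$ now reads exactly $\mathcal{R}_p(g)\geq\eta$, whence $\lambda_{\max}=\sup_{h\neq 0}\mathcal{R}_p(h)\geq\mathcal{R}_p(g)\geq\eta$, which is the assertion.

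I do not expect a genuine obstacle here: this is a one-line Collatz--Wielandt-type estimate once the Rayleigh-quotient formula and the summation-by-parts identity are available, and in particular it needs neither connectedness of $G$ nor \cref{thm:pr}. The only mild subtlety is that $g$ may have vanishing coordinates, so one cannot divide $(\Delta_p g)_i\geq\eta g_i^{p-1}$ through by $g_i^{p-1}$; testing against $g$ in the weighted inner product is precisely what circumvents this, and it is the form in which the lemma will be applied (with $g=f^{(k)}$ and $\eta=\underline{\lambda}_k$) to obtain part $(ii)$ of \cref{thm:main}.
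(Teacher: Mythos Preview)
Your argument is correct, but it is not the route the paper takes. The paper fixes the positive Perron eigenfunction $f$ of $\Delta_p$ at $\lambda_{\max}$ (available via \cref{thm:pr} under the standing connectedness assumption), sets $t_0=\max\{t\in\mathbb{R}:f-tg\geq 0\}>0$, and uses the monotonicity of $\Delta_p$ on $\mathbb{P}^n$ (\cref{lemma:basic}) to obtain
\[
\lambda_{\max}f_i^{p-1}=(\Delta_p f)_i\geq t_0^{p-1}(\Delta_p g)_i\geq t_0^{p-1}\eta\,g_i^{p-1},
\]
hence $f\geq t_0(\eta/\lambda_{\max})^{1/(p-1)}g$, which forces $\eta\leq\lambda_{\max}$ by maximality of $t_0$. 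That is the genuine Collatz--Wielandt argument, hinging on the positive eigenvector; your proof instead plugs $g$ directly into the variational formula $\lambda_{\max}=\max_{h}\mathcal{R}_p(h)$ after a summation-by-parts identity. Your route is shorter and, as you observe, needs neither connectedness nor \cref{thm:pr}; the paper's route stays inside the same Collatz--Wielandt machinery that drives the rest of the convergence analysis of \cref{alg:pm} and mirrors the nonnegative-tensor power-method literature it is modelled on. A small terminological quibble: what you label a ``Collatz--Wielandt-type estimate'' is really the Rayleigh-quotient (energy) argument; the paper's proof is the Collatz--Wielandt one.
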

\begin{proof}
Define $t_0=\max\{t\in \mathbb{R}:f-tg\geq 0\}$. Since $f$ is positive on every vertices, we have $t_0>0$. For any $i\in V$, we compute $$\lambda_{\max} f_i^{p-1}=(\Delta_pf)_i\geq t_0^{p-1}(\Delta_pg)_i\geq t_0^{p-1}\eta g_i^{p-1}.$$ This shows $$f\geq t_0\left(\frac{\eta}{\lambda_{\max}}\right)^{\frac{1}{p-1}}g.$$ By definition of $t_0$, we have $\eta \leq \lambda_{\max}$. This concludes the proof of this lemma.
\end{proof}

Then we prove (ii) in \cref{thm:main}.
\begin{proof}[Proof of (ii) in \cref{thm:main}] 
  By definition, we have $\Delta_pf^{(k)}\geq \underline{\lambda}_{k}\left(f^{(k)}\right)^{p-1}$. By \cref{lemma:com}, we have $\underline{\lambda}_{k}\leq \lambda$.
\end{proof}

For the proof of (iii) in \cref{thm:main}, we define $\mathcal{D}:\mathbb{P}^n \to \mathbb{P}^n$ by 
$$\mathcal{D}f:=\left(\Delta_pf\right)^{\frac{1}{p-1}}, \text{ for any $f\in \mathbb{P}^n$.}$$ We have the following lemma.
\begin{lemma}\label{lemma:increasing}
  For any $f,g\in \mathbb{P}^n$ with $f\leq g$ and $f\neq g$, there exists $r_0$ such that $\mathcal{D}^{r_0}f<\mathcal{D}^{r_0}g$. 
\end{lemma}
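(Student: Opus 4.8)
The plan is to track, iteration by iteration, the set of vertices on which strict inequality has already been gained, and to show that this set strictly grows at each step until it fills all of $V$.

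First I would record two monotonicity facts. Because $\sigma\equiv-1$ and $\kappa_i\geq 0$, for $h\in\mathbb{P}^n$ we have $(\Delta_ph)_i=\mu_i^{-1}\big(\sum_{j\sim i}w_{ij}(h_i+h_j)^{p-1}+\kappa_ih_i^{p-1}\big)$, which is a sum of nondecreasing functions of the coordinates of $h$; together with the monotonicity of $t\mapsto t^{1/(p-1)}$ on $[0,\infty)$ this re-proves \cref{lemma:basic} and shows that $\mathcal{D}$ is monotone, so in particular $\mathcal{D}^kf\leq\mathcal{D}^kg$ for all $k\geq 0$. The second, crucial, fact is that $t\mapsto t^{p-1}$ is \emph{strictly} increasing on $[0,\infty)$ (since $p>1$) and that every weight $w_{ij}$ is strictly positive.

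Fix $k$, write $\tilde f=\mathcal{D}^kf$, $\tilde g=\mathcal{D}^kg$, and set $S_k:=\{i\in V:\tilde f_i<\tilde g_i\}$. The key ``spreading'' observation is that $(\Delta_p\tilde g)_i-(\Delta_p\tilde f)_i$ is a $\mu_i^{-1}$-weighted sum of the quantities $w_{ij}\big((\tilde g_i+\tilde g_j)^{p-1}-(\tilde f_i+\tilde f_j)^{p-1}\big)$ over $j\sim i$ together with $\kappa_i(\tilde g_i^{p-1}-\tilde f_i^{p-1})$, all of which are $\geq 0$ since $\tilde f\leq\tilde g$; moreover the term attached to an edge $\{i,j\}$ is strictly positive exactly when $\tilde g_i+\tilde g_j>\tilde f_i+\tilde f_j$, i.e.\ exactly when $i\in S_k$ or $j\in S_k$. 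Hence $(\Delta_p\tilde g)_i>(\Delta_p\tilde f)_i$, equivalently $i\in S_{k+1}$, whenever $i$ itself lies in $S_k$ or $i$ has a neighbor in $S_k$. Since $G$ is connected it has no isolated vertex (the case $n=1$ being immediate, as then $f\leq g$ and $f\neq g$ already force $f<g$), so $S_{k+1}\supseteq S_k\cup N(S_k)$, writing $N(S_k)$ for the set of vertices adjacent to $S_k$.

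To finish, note that $f\neq g$ and $f\leq g$ give $S_0\neq\emptyset$, and if $S_k\neq V$ then connectedness of $G$ yields an edge from $V\setminus S_k$ to $S_k$ whose endpoint outside $S_k$ lies in $N(S_k)\subseteq S_{k+1}$, so $|S_{k+1}|\geq|S_k|+1$. Therefore $S_{r_0}=V$ for some $r_0\leq n-|S_0|\leq n-1$, which is precisely $\mathcal{D}^{r_0}f<\mathcal{D}^{r_0}g$. I expect the only delicate point to be the spreading step — guaranteeing that strictness, once present at a vertex, persists there and passes to every neighbor after a single application of $\mathcal{D}$ — and this is exactly where the strict monotonicity of $t\mapsto t^{p-1}$ on $[0,\infty)$ and the positivity of the edge weights are used; beyond that the argument is a routine connectedness-and-counting induction.
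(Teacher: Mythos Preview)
Your argument is correct and follows essentially the same idea as the paper's proof. The paper fixes one vertex where $f<g$ and shows by induction that after $k$ applications of $\mathcal{D}$ strict inequality holds on the combinatorial ball $B_k(1)$, taking $r_0$ to be the eccentricity of that vertex; you instead track the whole strict-inequality set $S_k$ and show $S_{k+1}\supseteq S_k\cup N(S_k)$, concluding with $r_0\le n-|S_0|$. Both are the same ``spreading to neighbors'' mechanism driven by the strict monotonicity of $t\mapsto t^{p-1}$ and the positivity of the edge weights, just with different bookkeeping.
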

\begin{proof}
Since $f\leq g$, we have $\mathcal{D}^{k}f\leq \mathcal{D}^{k}g$ for any $k$ by Lemma \ref{lemma:basic} and induction. Because $f\neq g$, we assume that $f_1<g_1$ without loss of generality. Define $$r_0=\max_{i\in V}d(i,1),$$
where $d(x,y)$ is the combinatorial distance of the graph $G=(V,E,w,\mu,\kappa)$, i.e., the minimal number of edges in all paths connecting $x$ and $y$. 

\begin{claim}
  \label{clm:distance}
  $\left(\mathcal{D}^kf\right)_j<\left(\mathcal{D}^kg\right)_j$ for any $j\in B_k(1)$, where $B_k(1):=\{i\in V:d(1,i)\leq k\}$.
\end{claim}

This lemma is implied by this claim because we have $B_{r_0}(1)=V$ and $\left(\mathcal{D}^{r_0}f\right)_j<\left(\mathcal{D}^{r_0}g\right)_j$ for any $j\in B_{r_0}(1)$. 

We prove \cref{clm:distance} by induction. We directly have that the claim holds for $k=0$. We assume the claim holds for $k-1$ and prove this claim holds for $k$. For any $j\in B_{k}(1)$, we compute \begin{equation*}
  \begin{aligned}    \left(\mathcal{D}^{k}g\right)_j&=\frac{1}{\mu_j^{\frac{1}{p-1}}}\left(\sum_{l\sim j}w_{lj}\left|\left(\mathcal{D}^{k-1}g\right)_l+\left(\mathcal{D}^{k-1}g\right)_j\right|^{p-1}+\kappa_j\left|\left(\mathcal{D}^{k-1}g\right)_j\right|^{p-1}\right)^{\frac{1}{p-1}}\\
  &>\frac{1}{\mu_j^{\frac{1}{p-1}}}\left(\sum_{l\sim j}w_{lj}\left|\left(\mathcal{D}^{k-1}f\right)_l+\left(\mathcal{D}^{k-1}f\right)_j\right|^{p-1}+\kappa_j\left|\left(\mathcal{D}^{k-1}f\right)_j\right|^{p-1}\right)^{\frac{1}{p-1}}\\
  &=\left(\mathcal{D}^{k}f\right)_j.
     \end{aligned}
\end{equation*}
The above inequality is obtained by induction. This proves the claim.
\end{proof}

\begin{proof}[Proof of (iii) in \cref{thm:main}]
  Since $\{f^{(k)}\}_{k=1}^{\infty}\subset \mathcal{S}_2$ and $\mathcal{S}_2$ is compact, we have that there exists a convergent subsequence $\{f^{(k_j)}\}_{j=1}^{\infty}$ of  $\{f^{(k)}\}_{k=1}^{\infty}$ and denote its limit by $f^*.$ By definition, we have $\underline{\lambda}_{k}^{\frac{1}{p-1}}f_i^{(k)}\leq \left(\mathcal{D}f^{(k)}\right)_i$ for any  $i\in V$ and $k\in \mathbb{N}$. Because $\lim_{j\to \infty}f^{(k_j)}=f^*$, $\underline{\lambda}_{k}\leq \underline{\lambda}_{k+1}$, and $\mathcal{D}$ is continuous on $P$, we have $\underline{\lambda}_{k}^{\frac{1}{p-1}}f_i^*\leq \left(\mathcal{D}f^*\right)_i$ for any $i\in V$. Moreover, because $\underline{\lambda}_{k}\leq \underline{\lambda}_{k+1}\leq \lambda_{\max}$, we have the limit of $\underline{\lambda}_{k}$ exist as $k$ tends to infinity and denote this limit by $\underline{\lambda}$, i.e., $\lim_{k\to \infty}\underline{\lambda}_{k}=\underline{\lambda}$. We have $\underline{\lambda}^{\frac{1}{p-1}}f_i^*\leq \left(\mathcal{D}f^*\right)_i$ for $i\in V$.

  \begin{claim}
    \label{clm:dmf}
    For any $m\in \mathbb{N}$, there exists $i_0\in V$ such that $$\left(\mathcal{D}^{m+1}f^*\right)_{i_0}=\underline{\lambda}^{\frac{1}{p-1}}\left(\mathcal{D}^mf^*\right)_{i_0}.$$
  \end{claim}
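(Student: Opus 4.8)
The plan is to argue by contradiction, exploiting the strict monotonicity furnished by Lemma~\ref{lemma:increasing}. Suppose the claim fails for some fixed $m\in\mathbb{N}$, so that $\left(\mathcal{D}^{m+1}f^*\right)_i\neq \underline{\lambda}^{\frac{1}{p-1}}\left(\mathcal{D}^m f^*\right)_i$ for every $i\in V$. Combined with the inequality $\underline{\lambda}^{\frac{1}{p-1}}f_i^*\le(\mathcal{D}f^*)_i$ for all $i$, which we established just before the claim, and the monotonicity of $\mathcal{D}$ from Lemma~\ref{lemma:basic}, iterating gives $\underline{\lambda}^{\frac{1}{p-1}}\left(\mathcal{D}^m f^*\right)_i\le\left(\mathcal{D}^{m+1}f^*\right)_i$ for all $i$; together with the assumed failure of equality at every vertex this yields the strict componentwise inequality $\underline{\lambda}^{\frac{1}{p-1}}\left(\mathcal{D}^m f^*\right) < \mathcal{D}^{m+1}f^*$. (Here I should double-check that $f^*\in\mathrm{int}(\mathbb{P}^n)$, which follows because $\Vert f^*\Vert=1$ and $f^*$ is a limit of functions in $\mathrm{int}(\mathbb{P}^n)$ satisfying the same chain of inequalities, so no coordinate can vanish — this is essentially the argument in Lemma~\ref{lemma:pr}.)

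Next I would use this strict gap to produce a scalar $\rho>\underline{\lambda}^{\frac{1}{p-1}}$ with $\rho\,\mathcal{D}^m f^*\le \mathcal{D}^{m+1}f^*$ componentwise; such a $\rho$ exists by compactness since all coordinates of $\mathcal{D}^m f^*$ are strictly positive. Applying $\mathcal{D}$ repeatedly and using its monotonicity together with its positive homogeneity of degree $1$ (note $\mathcal{D}(tf)=(\Delta_p(tf))^{1/(p-1)}=(t^{p-1}\Delta_p f)^{1/(p-1)}=t\,\mathcal{D}f$ for $t\ge0$), one propagates the bound: $\rho\,\mathcal{D}^{m+1}f^*\le\mathcal{D}^{m+2}f^*$, and inductively $\rho^k \mathcal{D}^m f^* \le \mathcal{D}^{m+k}f^*$ for all $k$. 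Setting $g=\mathcal{D}^m f^*$, this reads $\Delta_p g\ge \rho^{p-1} g^{p-1}$ at the level $k=1$ after rescaling — more precisely $\mathcal{D}g = \mathcal{D}^{m+1}f^* \ge \rho\, g$, i.e. $\Delta_p g \ge \rho^{p-1} g^{p-1}$ with $\rho^{p-1} > \underline{\lambda}$.

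Now I invoke Lemma~\ref{lemma:com}: the existence of $g\in\mathbb{P}^n\setminus\{0\}$ with $\Delta_p g\ge \eta g^{p-1}$ forces $\eta\le\lambda_{\max}$, which alone is not yet a contradiction. The sharper point is that Claim~\ref{clm:dmf} is meant to be used later to show $\underline{\lambda}=\lambda_{\max}$; so within this claim I instead want a contradiction with the \emph{definition} of $\underline{\lambda}$ as $\lim_k\underline{\lambda}_k = \sup_k\underline{\lambda}_k$. The right route is: from $\mathcal{D}g\ge\rho g$ with $\rho>\underline{\lambda}^{1/(p-1)}$ and $g\in\mathrm{int}(\mathbb{P}^n)$, one checks that iterating the normalized dynamics starting from $g$ produces lower bounds $\underline{\lambda}_k$ eventually exceeding $\underline{\lambda}$ — but the sequence $\{\underline{\lambda}_k\}$ was already shown in part~(i) to be monotone with supremum $\underline{\lambda}$, and by construction $f^{(k)}$ and its normalization of $\mathcal{D}^j g$ track the same quotients, contradicting $\sup_k\underline{\lambda}_k=\underline{\lambda}$. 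I expect the main obstacle to be making this last step precise: carefully relating the iterates $\mathcal{D}^k g$ (for $g=\mathcal{D}^m f^*$, a point reached only in the limit) back to the actual algorithmic sequence $\underline{\lambda}_k$, so that a strict improvement at $g$ translates into a strict improvement over $\sup_k\underline{\lambda}_k$. An alternative, cleaner ending would be to apply Lemma~\ref{lemma:increasing} directly to the pair $\underline{\lambda}^{1/(p-1)}\mathcal{D}^m f^* \lneq \mathcal{D}^{m+1}f^*$ to get, after finitely many applications of $\mathcal{D}$, a uniform strict gap, then extract $\rho$ and run the homogeneity argument above to contradict the optimality of $\underline{\lambda}=\sup_k\underline{\lambda}_k$; I would present it in whichever form keeps the bookkeeping shortest.
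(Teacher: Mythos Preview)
Your setup is correct: assuming the claim fails at some $m_0$ you rightly derive the strict componentwise inequality $\underline{\lambda}^{1/(p-1)}\mathcal{D}^{m_0}f^*<\mathcal{D}^{m_0+1}f^*$, and you correctly identify that the contradiction must come from $\underline{\lambda}=\sup_k\underline{\lambda}_k$, not from Lemma~\ref{lemma:com}. But the argument does not close, and you yourself flag the reason: you never manage to tie the inequality at the limit point $f^*$ back to the actual algorithmic sequence $\{\underline{\lambda}_k\}$. Your parenthetical that ``$f^{(k)}$ and its normalization of $\mathcal{D}^j g$ track the same quotients'' is not justified, because $g=\mathcal{D}^{m_0}f^*$ is built from the \emph{limit} $f^*$, which need not lie on the orbit $\{f^{(k)}\}$; normalizing $\mathcal{D}^j g$ therefore does not produce any $f^{(k)}$, and the corresponding Collatz--Wielandt quotient has no direct relation to any $\underline{\lambda}_k$.

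The missing idea, and the way the paper closes the argument, is a continuity step: since $\mathcal{D}^{m_0}$ and $\mathcal{D}^{m_0+1}$ are continuous and $f^{(k_j)}\to f^*$ along the chosen subsequence, the strict inequality at $f^*$ persists at some actual iterate $f^{(k_0)}$ with $k_0=k_j$ large, i.e.\ $\underline{\lambda}^{1/(p-1)}\mathcal{D}^{m_0}f^{(k_0)}<\mathcal{D}^{m_0+1}f^{(k_0)}$. Now the link to the algorithm is immediate, because $\mathcal{D}^{m_0}f^{(k_0)}$ \emph{is} a positive scalar multiple of $f^{(k_0+m_0)}$ (the algorithm's iterates are exactly successive normalizations of $\mathcal{D}$), and $\mathcal{D}^{m_0+1}f^{(k_0)}$ is the same scalar times $(g^{(k_0+m_0)})^{1/(p-1)}$. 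Taking the min over $i\in V$ of the quotient then gives $\underline{\lambda}<\underline{\lambda}_{k_0+m_0}\le\underline{\lambda}$, the desired contradiction. Your proposal has all the right ingredients except this continuity transfer to a genuine iterate; without it, the proof remains incomplete.
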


  We prove \cref{clm:dmf} by contradiction. Assume there exists $m_0\in \mathbb{N}$, such that $$\left(\mathcal{D}^{m_0+1}f^*\right)_i>\underline{\lambda}^{\frac{1}{p-1}}\left(\mathcal{D}^{m_0}f^*\right)_i,\quad\text{ for any $i\in V$}.$$ By continuity of $\mathcal{D}$, we have that there exists $k_0$ which is large enough such that $$\left(\mathcal{D}^{m_0+1}f^{(k_0)}\right)_i>\underline{\lambda}^{\frac{1}{p-1}}\left(\mathcal{D}^{m_0}f^{(k_0)}\right)_i,\quad\text{ for any $i\in V$.}$$ We compute 
  \begin{align*}
    \begin{aligned}
        \mathcal{D}^{m_0}f^{(k_0)}&=\mathcal{D}^{m_0-1}\left(\Delta_pf^{(k_0)}\right)^{\frac{1}{p-1}}=\mathcal{D}^{m_0-1}\left(g^{(k_0)}\right)^{\frac{1}{p-1}}\\
        &=\mathcal{D}^{m_0-1}\left(\Vert g^{(k_0)}\Vert^{\frac{1}{p-1}}f^{(k_0+1)}\right)\\
        &=\Vert g^{(k_0)}\Vert^{\frac{1}{p-1}}\mathcal{D}^{m_0-1}\left(f^{(k_0+1)}\right)\\
        &=\Vert g^{(k_0)}\Vert^{\frac{1}{p-1}}\Vert g^{(k_0+1)}\Vert^{\frac{1}{p-1}}\cdots \Vert g^{(k_0+m_0-1)}\Vert^{\frac{1}{p-1}}f^{(k_0+m_0)}.
    \end{aligned}
  \end{align*}
  The above fourth equality, we use the fact that $\mathcal{D}(cf)=c\mathcal{D}(f)$ for any $c\geq 0 $ and $f\in \mathbb{P}^n$.  We compute $$\mathcal{D}^{m_0+1}(f^{(k_0)})=\Vert g^{(k_0)}\Vert^{\frac{1}{p-1}}\Vert g^{(k_0+1)}\Vert^{\frac{1}{p-1}}\cdots \Vert g^{(k_0+m_0-1)}\Vert^{\frac{1}{p-1}}\left(g^{(k_0+m_0)}\right)^{\frac{1}{p-1}}.$$
  Combine the above two equalities, we have $$\underline{\lambda}<\min_{i\in V}\frac{\left(\mathcal{D}^{m_0+1}(f^{k_0})\right)_i^{p-1}}{\left(\mathcal{D}^{m_0}(f^{k_0})\right)_i^{p-1}}=\min_{i\in V}\frac{\left(g^{(k_0+m_0)}\right)_i}{\left(f^{(k_0+m_0)}_i\right)^{p-1}}= \underline{\lambda_{k_0+m_0}}\leq \underline{\lambda}.$$
  Contradiction. This concludes the proof of the claim. 

  \begin{claim}
    \label{clm:df}
    $\underline{\lambda}^{\frac{1}{p-1}}f^*= \mathcal{D}f^*.$
  \end{claim}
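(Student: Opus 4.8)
\textbf{Proof proposal for \cref{clm:df}.}
The plan is to argue by contradiction, combining the inequality already obtained just before \cref{clm:dmf}, namely $\underline{\lambda}^{\frac{1}{p-1}}f^*\le \mathcal{D}f^*$ in $\mathbb{P}^n$, with \cref{clm:dmf} and the eventual strict monotonicity of $\mathcal{D}$ supplied by \cref{lemma:increasing}. Since one inequality $\underline{\lambda}^{\frac{1}{p-1}}f^*\le \mathcal{D}f^*$ is in hand, it suffices to exclude the case $\underline{\lambda}^{\frac{1}{p-1}}f^*\ne \mathcal{D}f^*$, i.e. that strict inequality holds at some vertex.

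So first I would set $h:=\underline{\lambda}^{\frac{1}{p-1}}f^*$ and $\tilde h:=\mathcal{D}f^*$; both lie in $\mathbb{P}^n$ (here one uses $\underline{\lambda}\ge 0$, which holds because $\sigma\equiv-1$ and $\kappa\ge 0$ force $\Delta_p\colon\mathbb{P}^n\to\mathbb{P}^n$, hence $\underline{\lambda}_k\ge 0$ and $\underline{\lambda}\ge 0$), and $h\le \tilde h$. Under the contradiction hypothesis $h\ne\tilde h$, \cref{lemma:increasing} yields an integer $r_0$ with $\mathcal{D}^{r_0}h<\mathcal{D}^{r_0}\tilde h$ at \emph{every} vertex. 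Next I would push $\underline{\lambda}^{\frac{1}{p-1}}$ through the iterates using the positive homogeneity $\mathcal{D}(cf)=c\,\mathcal{D}(f)$ for $c\ge 0$ (the same fact invoked in the proof of \cref{clm:dmf}), giving $\mathcal{D}^{r_0}h=\underline{\lambda}^{\frac{1}{p-1}}\mathcal{D}^{r_0}f^*$ and $\mathcal{D}^{r_0}\tilde h=\mathcal{D}^{r_0+1}f^*$, so that
$$\underline{\lambda}^{\frac{1}{p-1}}\bigl(\mathcal{D}^{r_0}f^*\bigr)_i<\bigl(\mathcal{D}^{r_0+1}f^*\bigr)_i\qquad\text{for all }i\in V.$$
But \cref{clm:dmf} with $m=r_0$ produces some $i_0\in V$ at which $\bigl(\mathcal{D}^{r_0+1}f^*\bigr)_{i_0}=\underline{\lambda}^{\frac{1}{p-1}}\bigl(\mathcal{D}^{r_0}f^*\bigr)_{i_0}$, contradicting the displayed strict inequality. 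Hence $\underline{\lambda}^{\frac{1}{p-1}}f^*=\mathcal{D}f^*$.

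No serious estimate is needed; the argument is a clean assembly of the three ingredients. The one point requiring care is verifying the hypotheses of \cref{lemma:increasing} (in particular $\underline{\lambda}^{\frac{1}{p-1}}f^*\in\mathbb{P}^n$, i.e. $\underline{\lambda}\ge 0$, and $f^*\le\mathcal{D}f^*$ with the two unequal) and the homogeneity bookkeeping when commuting the scalar $\underline{\lambda}^{\frac{1}{p-1}}$ past $\mathcal{D}^{r_0}$; I expect this to be the only spot where one must be precise. Once \cref{clm:df} is proved, $f^*$ is an eigenfunction of $\Delta_p$ with eigenvalue $\underline{\lambda}$ lying in $\mathrm{int}(\mathbb{P}^n)$, so \cref{lemma:pr} forces $\underline{\lambda}=\lambda_{\max}$, and together with part (i) and (ii) and \cref{thm:pr}(ii) one recovers the full convergence statement in (iii).
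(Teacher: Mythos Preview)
Your proposal is correct and follows essentially the same route as the paper: assume strict inequality at some vertex, apply \cref{lemma:increasing} to the pair $\underline{\lambda}^{\frac{1}{p-1}}f^*\le \mathcal{D}f^*$, use homogeneity of $\mathcal{D}$ to rewrite the conclusion as $\underline{\lambda}^{\frac{1}{p-1}}\mathcal{D}^{r_0}f^*<\mathcal{D}^{r_0+1}f^*$, and contradict \cref{clm:dmf}. The paper's proof is simply a terser version of yours, leaving the homogeneity bookkeeping and the verification $\underline{\lambda}\ge 0$ implicit.
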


  We prove \cref{clm:df} by contradiction.  Assume there exist $i_0$ such that $\underline{\lambda}^{\frac{1}{p-1}}f_{i_0}^*<\left(\mathcal{D}f^*\right)_{i_0}$. By \cref{lemma:increasing}, we have there exists $r_0$ such that $\underline{\lambda}^{\frac{1}{p-1}}\mathcal{D}^{r_0}f^*<\mathcal{D}^{r_0+1}f^*$. This contradicts \cref{clm:dmf}. This concludes the proof of \cref{clm:df}.

  Finally, since $\underline{\lambda}f^*= \Delta_pf^*$ and $f\in \mathbb{P}^n$, we obtain $f^*=f$ and $\underline{\lambda}=\lambda$ by \cref{thm:pr} and \cref{lemma:pr}, which proves (iii) in \cref{thm:main}.
\end{proof}

\begin{algorithm}[!t]
  \caption{An algorithm for computing the largest eigenpair of signless graph $p$-Laplacian}
  \label{alg:pm}
  \begin{description}
    \item[Inputs:] A connected signless graph $\Gamma = (G, \sigma)$ with $G=(V,E,w,\mu,\kappa)$ and $\sigma \equiv -1$, the parameter $p$, an initial iteration for eigenvectors $f^{(0)} \in \mathrm{int}(\mathbb{P}^n)$ and a tolerance $0 < \epsilon < 1$.
    \item[Outputs:] The largest eigenvalue $\lambda_{\max}$ for $\Delta^{\Gamma}_p$ and its corresponding eigenvector $f_{\max}$.
  \end{description}
  \hrule
  \begin{algorithmic}[1]
    \If{$\kappa < 0$}
      \State $c=\max_{i \in V} \left|\frac{\kappa_i}{\mu_i}\right|$
      \State Modify $\kappa_i$ as $\kappa_i + c \mu_i$ for each $i$.
    \Else
      \State $c=0$
    \EndIf
    \State $g^{(0)} = \Delta^{\Gamma}_p f^{(0)}$
    \For{$k=1, 2, \dots$}
      \State Compute $f^{(k)} = \frac{(g^{(k-1)})^{\frac{1}{p-1}}}{\| (g^{(k-1)})^{\frac{1}{p-1}} \|}$ and $g^{(k)} = \Delta^{\Gamma}_p f^{(k)}$.
      \State Compute $\underline{\lambda}_{k}$ and $\overline{\lambda}_{k}$ as in \eqref{minmaxlambda}.
      \If{$\left(\overline{\lambda}_{k} - \underline{\lambda}_{k}\right) / \left(\overline{\lambda}_{k} + \underline{\lambda}_{k}\right) < \epsilon$}
        \State $\lambda_{\max} = \left(\overline{\lambda}_{k} + \underline{\lambda}_{k}\right) / 2 - c$
        \State $f_{\max} = f^{(k)}$
        \State \Return $\lambda_{\max}, ~f_{\max}$
      \EndIf
    \EndFor
  \end{algorithmic}
\end{algorithm}

The convergent algorithm analyzed above is summarized in \cref{alg:pm}, which can calculate the largest eigenpair of signless graph $p$-Laplacian. Only two vectors, $f^{(k)}$ and $g^{(k)}$, need to be maintained during the solution process and no explicit matrices or tensors are constructed or stored. Note that if $p$ is even, \cref{alg:pm} reduces to the power method for computing the largest eigenvalues of nonnegative tensors if one recalls \cref{prop:tensor} for equivalence of graph $p$-Laplacian and tensors. To conclude this section, we give some numerical experiments that demonstrate the exceptional robustness and speed of our algorithm. In the following examples, we assume $w\equiv1$, $\mu\equiv1$, $\kappa\equiv0$, and $\sigma\equiv-1$.

\begin{example}
  We first solve the largest eigenvalue of signless $p$-Laplacian of $K_8 \vee \overline{K}_{12}$ with $p=\frac{10}{3}$. Twenty random nonnegative initial vector $f^{(0)}$ are used and the relative gaps between $\overline{\lambda}_{k}$ and $\underline{\lambda}_{k}$ during iterations are shown in the left panel of \cref{fig:ex4}. The curves are similar in that they decay monotonically until approximately the 55th iteration and level off from then on, which shows that our algorithm is insensitive to the initial vector. The constant slopes of these curves also suggest linear convergence rate for our algorithm, which is analogous to the results proved in \cite{ZQ} for even $p$. In fact, though lacking a rigorous proof, the linear convergence rate is observed for both integer $p$ and rational $p$.
\end{example}

\begin{example}
  Next, we generate 1000 random graphs and compute their largest eigenvalue of signless 20-Laplacian. These graphs have a fixed number of 1000 vertices but differ in the number of edges. According to the definition of $p$-Laplacian, the complexity of our algorithm is $\mathcal{O}(p|E|)$, where $|E|$ denotes the number of edges. This is confirmed by the numerical results from the right panel of \cref{fig:ex4}, where the solution time is proportional to $|E|$. Note that it takes less than 0.4 seconds to compute the largest eigenvalue of signless 20-Laplacian for a graph with 1000 vertices and 250000 edges. In contrast, at least half a second is required to compute the largest eigenvalue using the algorithm in \cite{CDN} for solving the ordinary problem in \cref{ex:nodal}.
\end{example}

\begin{figure}[!t]
    \centering
    \subfloat{\label{subfig:ex4rate}
    \includegraphics[width = 6cm]{"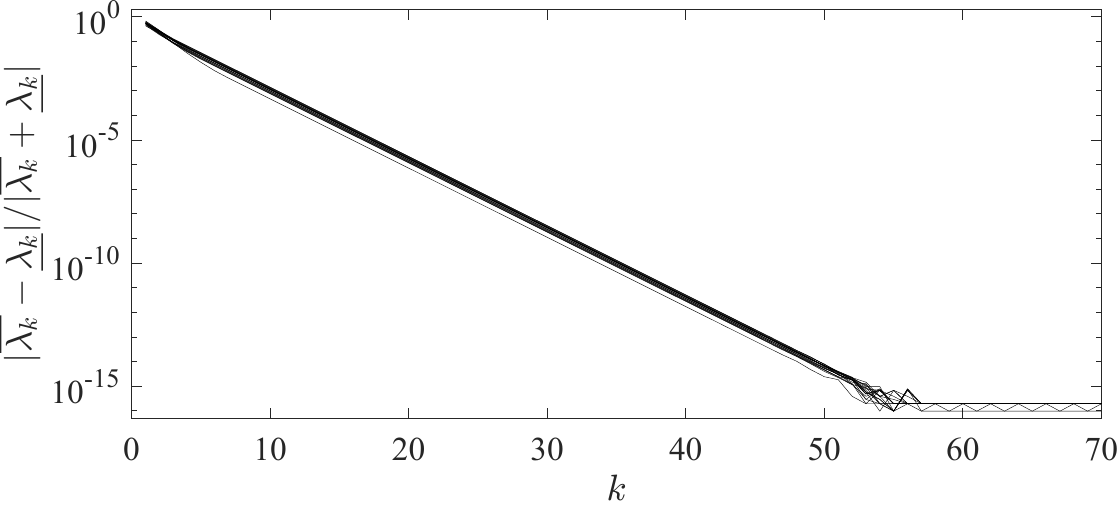"}} \quad
    \subfloat{\label{subfig:ex4time}
    \includegraphics[width = 6cm]{"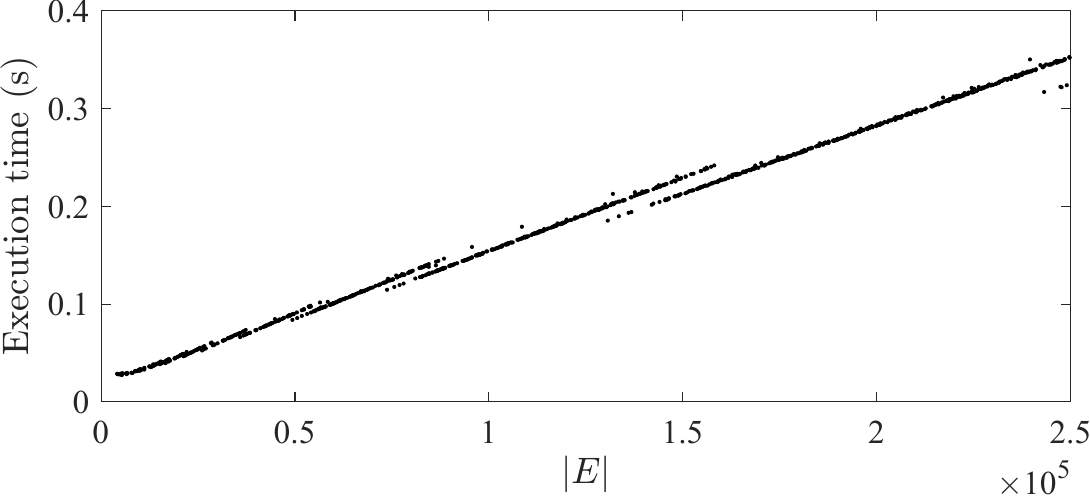"}}
    \caption{Left: relative gap between $\overline{\lambda}_{k}$ and $\underline{\lambda}_{k}$ versus iteration for random initial vectors. Right: time for solving the largest eigenvalue of signless 20-Laplacian of different graphs.}
    \label{fig:ex4}
\end{figure}

\section{Application}\label{sec:app}
In this section, we only consider graphs with edge weight $w\equiv1$, vertex measure $\mu\equiv1$, and potential function $\kappa\equiv0$, since almost all of the graphs considered in these forbidden subgraph type problems are such graphs. So we denote graph $G=(V,E)$ instead of  $G=(V,E,w,\mu,\kappa)$ for short. We use $\lambda^{(p)}_{\max}(G)$ to denote the largest eigenvalue of signless $p$-Laplacian of $G$ in this section.

We first propose a criterion to determine whether graph $G'$ can not be a subgraph of graph $G$ (see \cref{Criterion}), and verify from the two perspectives of theory and experiment that this criterion is superior to the criterion in the linear case.
\begin{lemma}\label{lemma:criterion}
  If $G'=(V',E')$ is a subgraph of $G=(V,E)$, then  $\lambda^{(p)}_{\max}(G')\leq \lambda^{(p)}_{\max}(G)$ for any $p>1$.
\end{lemma}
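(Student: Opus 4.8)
The plan is to use the variational characterization $\lambda^{(p)}_{\max}(G)=\max_{f\in\mathcal{S}_p}\mathcal{R}^{G}_p(f)$ together with the fact that a subgraph's Rayleigh quotient can be realized on the larger graph by extending an eigenfunction by zero. Concretely, since $\kappa\equiv 0$, $\mu\equiv 1$, $w\equiv 1$ throughout this section, for the signless $p$-Laplacian we have
\begin{align*}
  \mathcal{R}^{G}_p(f)=\frac{\sum_{\{i,j\}\in E}\left|f_i+f_j\right|^p}{\sum_{i\in V}|f_i|^p}.
\end{align*}
First I would take a nonzero $f'\in C(V')$ achieving $\lambda^{(p)}_{\max}(G')=\mathcal{R}^{G'}_p(f')$ (it exists by the Lusternik--Schnirelman variational expression quoted in \cref{sec:pre}), and define its zero-extension $f\in C(V)$ by $f_i=f'_i$ for $i\in V'$ and $f_i=0$ for $i\in V\setminus V'$. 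Then $f$ is nonzero.

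The key computation is to compare $\mathcal{R}^{G}_p(f)$ with $\mathcal{R}^{G'}_p(f')$. The denominators agree since $\sum_{i\in V}|f_i|^p=\sum_{i\in V'}|f'_i|^p$. For the numerator, $E'\subset E$, so
\begin{align*}
  \sum_{\{i,j\}\in E}\left|f_i+f_j\right|^p=\sum_{\{i,j\}\in E'}\left|f'_i+f'_j\right|^p+\sum_{\{i,j\}\in E\setminus E'}\left|f_i+f_j\right|^p\geq \sum_{\{i,j\}\in E'}\left|f'_i+f'_j\right|^p,
\end{align*}
because every term in the extra sum is nonnegative. Hence $\mathcal{R}^{G}_p(f)\geq \mathcal{R}^{G'}_p(f')=\lambda^{(p)}_{\max}(G')$, and since $\lambda^{(p)}_{\max}(G)=\max_{g}\mathcal{R}^{G}_p(g)\geq \mathcal{R}^{G}_p(f)$, the claim follows. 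One small bookkeeping point: the Rayleigh quotient is scale-invariant, so there is no need to normalize $f$ into $\mathcal{S}_p$ before applying the variational formula, but if one prefers to stay literally within $\mathcal{S}_p$ one simply replaces $f$ by $f/\Vert f\Vert_p$, which changes nothing.

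I do not anticipate a genuine obstacle here; the statement is essentially the standard monotonicity-of-largest-eigenvalue argument transported to the $p$-Laplacian, and the only thing to be careful about is that the zero-extension of an eigenfunction need not be an eigenfunction of the larger graph --- which is exactly why the proof goes through the variational (Rayleigh quotient) characterization rather than through the eigenvalue equation directly. The mild subtlety worth a sentence in the write-up is that the general definition of subgraph in \cref{sec:pre} matches weights, measures and potentials on the smaller graph, so the restriction identities $w'=w|_{E'}$, $\mu'=\mu|_{V'}$, $\kappa'=\kappa|_{V'}$ guarantee that the numerator and denominator of $\mathcal{R}^{G'}_p(f')$ are reproduced term-by-term inside those of $\mathcal{R}^{G}_p(f)$; here all three are trivial since everything is $\equiv 1$ or $\equiv 0$, but stating the argument for the general weighted case costs nothing and is cleaner.
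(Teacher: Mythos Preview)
Your proof is correct and follows essentially the same approach as the paper: take an eigenfunction for $\lambda^{(p)}_{\max}(G')$, extend it by zero to $V$, and compare Rayleigh quotients via the variational characterization, using $E'\subset E$ and nonnegativity of the extra terms. The paper's write-up is slightly terser but the argument is identical.
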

\begin{proof}
  Let $f$ be the eigenfunction of signless $p$-Laplacian of $G'$ corresponding to $\lambda^{(p)}_{\max}(G')$ with $\sum_{i\in V'}|f_i|^p=1$. By definition, we have $$\lambda_{\max}^{(p)}(G')=\frac{\sum_{\{i,j\}\in E'}w_{ij}\left|f_i+f_j\right|^p}{\sum_{i\in V'}|f_i|^p}.$$
 We define $g\in C(V)$ by $g(i)=f(i)$ if $i\in V'$ and $g(i)=0$ if $i\notin V'$. Since $\sum_{i\in V}|g_i|^p=1$, we directly compute
  \begin{align*}
    \begin{aligned}
      \lambda_{\max}^{(p)}(G)=&\max_{h\in \mathcal{S}_p(V)}\frac{\sum_{\{i,j\}\in E}w_{ij}\left|h_i+h_j\right|^p}{\sum_{i\in V}|h_i|^p}\geq \frac{\sum_{\{i,j\}\in E}w_{ij}\left|g_i+g_j\right|^p}{\sum_{i\in V}|g_i|^p}\\
      &\geq \frac{\sum_{\{i,j\}\in E'}w_{ij}\left|f_i+f_j\right|^p}{\sum_{i\in V'}|f_i|^p}=\lambda_{\max}^{(p)}(G').
    \end{aligned}
  \end{align*} 
  This concludes the proof.
\end{proof}

By the above lemma, we give the following criterion.
\begin{criterion}\label{Criterion}
  If $\lambda^{(p)}_{\max}(G')>\lambda^{(p)}_{\max}(G)$ for some $p>1$, then $G'$ can not be a subgraph of $G$.
\end{criterion}

Next, we present the following theorem to demonstrate the advantages of our criterion.
\begin{theorem}
  Let $G=(V,E)$ be a graph. We have $\lambda_{\max}^{(p)}(K_{1,d})\leq \lambda_{\max}^{(p)}(G)$ for any $p>1$ if and only if $K_{1,d}$ is a subgraph of $G$.
\end{theorem}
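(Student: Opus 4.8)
The plan is to split the biconditional. The ``if'' direction is immediate: if $K_{1,d}$ is a subgraph of $G$, then \cref{lemma:criterion} gives $\lambda^{(p)}_{\max}(K_{1,d})\le\lambda^{(p)}_{\max}(G)$ for every $p>1$. So all the work is in the ``only if'' direction, which I would prove in contrapositive form: assuming $K_{1,d}$ is \emph{not} a subgraph of $G$, I would produce a single $p>1$ with $\lambda^{(p)}_{\max}(K_{1,d})>\lambda^{(p)}_{\max}(G)$. The first observation is that $K_{1,d}$ embeds into $G$ precisely when $G$ has a vertex of degree $\ge d$, so the hypothesis becomes $\Delta:=\max_{i\in V}d_i\le d-1$, where $d_i$ denotes the degree of $i$ (the case $\Delta=0$, i.e.\ $G$ edgeless, is trivial since then $\lambda^{(p)}_{\max}(G)=0$).

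Next I would record two elementary estimates. For the first, a uniform upper bound on $\lambda^{(p)}_{\max}(G)$: by convexity of $t\mapsto t^p$ one has $|f_i+f_j|^p\le(|f_i|+|f_j|)^p\le 2^{p-1}(|f_i|^p+|f_j|^p)$; summing over the edges of $G$ (vertex $i$ appearing in $d_i$ of them) gives $\sum_{\{i,j\}\in E}|f_i+f_j|^p\le 2^{p-1}\sum_i d_i|f_i|^p\le 2^{p-1}\Delta\sum_i|f_i|^p$, so that $\mathcal{R}_p(f)\le 2^{p-1}\Delta$ for every nonzero $f$, hence $\lambda^{(p)}_{\max}(G)\le 2^{p-1}\Delta$ via the variational formula $\lambda^{(p)}_{\max}(G)=\max_f\mathcal{R}_p(f)$. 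For the second, a lower bound for the star: writing $0$ for its center and $1,\dots,d$ for the leaves, I would test $\mathcal{R}_p$ on the function taking value $s$ at $0$ and $1$ at each leaf, which (since $\sigma\equiv-1$) equals $d(s+1)^p/(s^p+d)$; because $(s+1)^p-s^p\ge p\,s^{p-1}\to\infty$ as $s\to\infty$ (here $p>1$ is used), this exceeds $d$ for $s$ large, so $\lambda^{(p)}_{\max}(K_{1,d})>d$ for every $p>1$.

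Finally I would combine them: since $d/\Delta\ge d/(d-1)>1$, every $p$ in the nonempty interval $\bigl(1,\,1+\log_2(d/\Delta)\bigr)$ satisfies $2^{p-1}\Delta<d$, whence
\begin{equation*}
\lambda^{(p)}_{\max}(G)\;\le\;2^{p-1}\Delta\;<\;d\;<\;\lambda^{(p)}_{\max}(K_{1,d}),
\end{equation*}
which is exactly the desired failure of the inequality, proving the contrapositive. (One can also compute $\lambda^{(p)}_{\max}(K_{1,d})=(1+d^{1/(p-1)})^{p-1}$ exactly from the symmetric eigenfunction, but only the bound $>d$ is needed.)

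I expect the one genuinely non-obvious step to be the choice of $p$: the inequality $\lambda^{(p)}_{\max}(K_{1,d})>\lambda^{(p)}_{\max}(G)$ can fail for large $p$ --- for instance $G=K_n$ with $d=n$ has $\Delta=n-1<n$ yet $\lambda^{(2)}_{\max}(K_n)=2n-2\ge n+1=\lambda^{(2)}_{\max}(K_{1,n})$ once $n\ge 3$ --- so there is no universal $p$, and one must exploit the fact that as $p\to 1^+$ the signless $p$-Laplacian spectral radius of any graph is squeezed down toward its maximum degree (which is precisely what the bound $\lambda^{(p)}_{\max}(G)\le 2^{p-1}\Delta$ encodes), while that of $K_{1,d}$ stays strictly above $d$. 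Once that is recognized, the two estimates above are routine.
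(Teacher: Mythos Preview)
Your proof is correct and follows essentially the same approach as the paper: both establish the upper bound $\lambda^{(p)}_{\max}(G)\le 2^{p-1}\Delta$ (the paper cites \cite{BS18}, you derive it via convexity), compare it against the star's eigenvalue for $p$ near $1$, and conclude the inequality reverses. Your treatment is slightly more self-contained---you use the test-function bound $\lambda^{(p)}_{\max}(K_{1,d})>d$ and give an explicit interval for $p$, whereas the paper quotes the exact value $(1+d^{1/(p-1)})^{p-1}$ (which you also mention) and argues via limits---but these are cosmetic differences on the same idea.
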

\begin{proof}
  By \cref{lemma:criterion}, we have that if $K_{1,d}$ is a subgraph of $G$, then $\lambda_{\max}^{(p)}(K_{1,d})\leq \lambda_{\max}^{(p)}(G)$. Next, we assume $\lambda_{\max}^{(p)}(K_{1,d})\leq \lambda_{\max}^{(p)}(G)$ and $G$ does not contain $K_{1,d}$. Then the maximal degree of $G$ is less than or equal to $d-1$. By Lemma 10 in \cite{BS18} , we have $\lambda_{\max}^{(p)}(G)\leq 2^{p-1}(d-1)$. We do the similar computation as Lemma 3.1 in \cite{amghibech2006bounds} to get $\lambda_{\max}^{(p)}(K_{1,d})=(1+d^{\frac{1}{p-1}})^{p-1}$. We have $\limsup_{p\to 1^+}\lambda^{(p)}_{\max}(G)\leq d-1$ and $\lim_{p\to 1^+}\lambda^{(p)}_{\max}(K_{1,d})=d$. So there must exist $\hat{p}>1$ that is sufficiently close to 1 such that  $\lambda^{(\hat{p})}_{\max}(G)<\lambda^{(\hat{p})}_{\max}(K_{1,d})$. Contradiction.
\end{proof}

At the end of this section, we give an example for which the usual criterion based on linear Laplacian and adjacency matrix fails, while \cref{Criterion} remains effective for some $1<\hat{p}<2$.
\begin{example}
  Consider $G$ plotted in the left panel of \cref{fig:ex5} and $G' = K_3 \vee \overline{K_3}$. We compute their largest eigenvalues of signless $p$-Laplacian (scaled by $2^{-p}$) with different $p$ varying from 1.01 to 5. The results are shown in the right panel of \cref{fig:ex5}. On the right of the vertical dotted-dashed line, i.e. $p \geq 2$, it is obvious that $\lambda^{(p)}_{\max}(G') < \lambda^{(p)}_{\max}(G)$. However, when $p$ approaches 1, the relation is reversed and we immediately know that $G'$ can not be a subgraph of $G$ according to \cref{Criterion}. Additionally, one can easily compute the largest eigenvalues of signless Laplacian, Laplacian, and adjacency matrix of $G'$ and $G$ and observe that all eigenvalues associated with $G'$ are no greater than those associated with $G$, thereby indicating the failure of criterion based on linear operators. 
\end{example}

\begin{figure}[!t]
  \centering
  \subfloat{\label{subfig:ex5graph}
  \includegraphics[width = 4cm]{"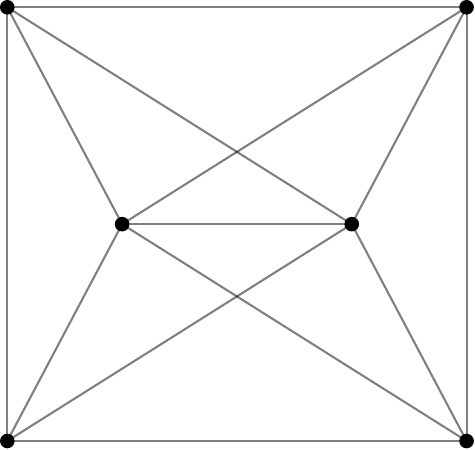"}} \quad
  \subfloat{\label{subfig:ex5eigs}
  \includegraphics[width = 8cm]{"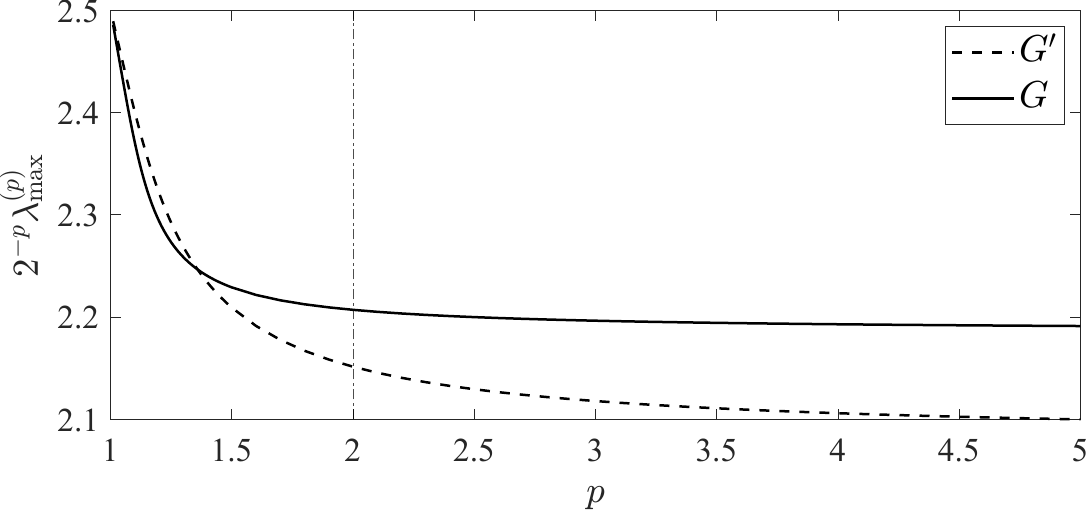"}}
  \caption{Left: the graph $G'$ used in Example 5.1. Right: the largest eigenvalues of signless $p$-Laplacian of $G$ and $G'$ for various $p$.}
  \label{fig:ex5}
\end{figure}

\section{Further remarks}\label{sec:remark}
In this article, we introduce the method of tensor eigenvalues into the graph $p$-Laplacian and provide algorithms for calculating the eigenvalues and eigenfunctions of signed graph $p$-Laplacian. 

When $p\geq 2$ is even, the graph $p$-Laplacian eigenproblems is equivalent to the tensor eigenproblems. However, we found that for any $p>1$, although these two types of problems can not be equivalent to each other, they have many similarities. Therefore, we firmly believe that there are many other tensor analysis results and methods that can be extended to the graph $p$-Laplacian eigenproblems even if $p>1$ is not even.

Currently, the tensor analysis is much more developed than graph $p$-Laplacian, so we hope that more tensor techniques can be applied to graph $p$-Laplacian, solving problems in the latter. On the other hand, the graph $p$-Laplacian has some additional results due to its graph properties, which may have potential applications in tensor analysis.

Considering the above situation, we propose the following open-ended question.
\begin{question}
  What other results in tensor analysis besides this article can be extended to graph $p$-Laplacian? What results of graph $p$-Laplacian can be used to improve results in tensor analysis? 
\end{question}

It is proved that the $\frac{\lambda^{(p)}_{\max}(G)}{2^p}$ convergent to the largest eigenvalue of the adjacency matrix of $G$ \cite{GLZ24}, which is also observed in \cref{fig:ex5}. This implies the following question
\begin{question}
  Given a graph $G=(V,E,w,\mu,\kappa)$, what is the convergence rate of $\frac{\lambda^{(p)}_{\max}(G)}{2^p}$?
\end{question}

Moreover, we see in \cref{fig:ex5} that $\frac{\lambda^{(p)}_{\max}(G)}{2^p}$ is convex and this holds for all the graphs we calculated. Thus, another question arises.
\begin{question}
  It is true that the curve $\frac{\lambda^{(p)}_{\max}(G)}{2^p}$ is convex for any graph $G$?
\end{question}

Ultimately, a challenging problem lies in determining the convergence rate of \cref{alg:pm}. The cases for even $p$ can be addressed through the analysis of tensor eigenvalue problems \cite{ZQ}, while the remaining cases for general $p$ remain an open question to be explored.
\begin{question}
  Can we give the proof of linear convergence rate of the algorithm in \cref{alg:pm} when $p$ is not an even integer?
\end{question}

\section*{Acknowledgments}
We are very grateful to Dong Zhang for his inspiring discussions and Yuchuan Zheng for his support with computing resources. We also appreciate Prof. Cui for generously providing the codes used in \cite{CDN}.

\bibliographystyle{siamplain}
\bibliography{references}
\end{document}